\definecolor{darkred}{RGB}{150, 0, 0}
\definecolor{darkgreen}{RGB}{0, 150, 0}
\definecolor{darkblue}{RGB}{0, 0, 150}
\newcounter{subsubparagraph}[subparagraph]
\renewcommand\thesubsubparagraph{%
  \thesubparagraph.\@arabic\c@subsubparagraph}
\newcommand\subsubparagraph{%
  \@startsection{subsubparagraph}    
    {6}                              
    {\parindent}                     
    {3.25ex \@plus 1ex \@minus .2ex} 
    {-1em}                           
    {\normalfont\normalsize\bfseries\em}}
\newcommand\l@subsubparagraph{\@dottedtocline{6}{10em}{5em}}
\newcommand{\subsubparagraphmark}[1]{}
\DeclareMathAlphabet{\mathrm}{OT1}{bch}{m}{n}
\DeclareSIUnit{\byte}{B}
\newcommand{\binary}{\{0, 1\}}
\newcommand{\Continue}{\textbf{continue}}
\renewcommand{\P}{\ensuremath{\mathrm{P}}}
\newcommand{\NP}{\ensuremath{\mathrm{NP}}}
\newcommand{\define}{\coloneqq}
\newcommand{\card}[1]{|#1|}
\newcommand{\R}{\mathds{R}}
\newcommand{\Z}{\mathds{Z}}
\newcommand{\teams}{\ensuremath{T}}
\newcommand{\rounds}{\ensuremath{R}}
\newcommand{\match}{\ensuremath{m}}
\newcommand{\matches}{\ensuremath{\mathcal{M}}}
\newcommand{\matching}{\ensuremath{M}}
\newcommand{\matchings}{\ensuremath{\mathfrak{M}}}
\newcommand{\perms}{\ensuremath{\Pi}}
\newcommand{\permswo}[1]{\ensuremath{\perms^{-#1}}}
\newcommand{\permswotor}[3]{\ensuremath{\perms^{\smash{-#1}}_{\smash{#2,#3}}}}
\newcommand{\lprelaxation}[1]{\ensuremath{v_{#1}^{\mathrm{LP}}}}
\newcommand{\ipsolution}{\ensuremath{v^{\mathrm{IP}}}}
\newcommand{\etal}{et~al.\@\xspace}
\theoremstyle{plain}
\newtheorem{theorem}{Theorem}[section]
\newtheorem{lemma}[theorem]{Lemma}
\newtheorem{proposition}[theorem]{Proposition}
\newtheorem*{claim*}{Claim}
\theoremstyle{definition}
\newtheorem{remark}[theorem]{Remark}
\newtheorem{definition}[theorem]{Definition}
\newtheorem{example}[theorem]{Example}
\newcounter{problemct}
\newtheorem{problem}[problemct]{Problem}
\let\OLDthebibliography\thebibliography
\renewcommand\thebibliography[1]{
  \OLDthebibliography{#1}
  \setlength{\parskip}{.2em}
  \setlength{\itemsep}{.2em plus 0.3ex}
}
\author{Jasper van Doornmalen}
\author{Christopher Hojny\footnote{Corresponding author}}
\author{Roel Lambers}
\author{Frits C.R. Spieksma}
\title{Integer Programming Models \\ for Round Robin Tournaments}
\affil{Eindhoven University of
Technology, Department of Mathematics and Computer Science, P.O. Box 513, 5600 MB Eindhoven, The Netherlands\\
\{m.j.v.doornmalen, c.hojny, r.lambers,
f.c.r.spieksma\}@tue.nl}
\date{}
\begin{document}
\maketitle


\begin{abstract}
Round robin tournaments are omnipresent in sport competitions and
beyond. We propose two new integer programming formulations for scheduling
a round robin tournament, one of which we call {\em the matching
  formulation}. We analytically compare their linear relaxations with the
linear relaxation of a well-known traditional formulation. We find that
the matching formulation is stronger than the other formulations, while its
LP relaxation is still being solvable in polynomial time. In addition, we
provide an exponentially sized class of valid inequalities for the matching
formulation. Complementing our theoretical assessment of the strength of
the different formulations, we also experimentally show that the matching
formulation is superior on a broad set of instances. Finally, we describe a
branch-and-price algorithm for finding round robin tournaments that is
based on the matching formulation.
\\
\noindent {\bf Keywords:} Integer Programming, OR in sports, Cutting planes, Branch-and-price\
\end{abstract}

\section{Introduction}

Integer programming continues to be a very popular way to obtain a
schedule for a round robin tournament.
The ability to straightforwardly model such a tournament, and next solve the resulting formulation using an integer programming solver, greatly facilitates
practitioners. Moreover, it is usually possible to add all kinds of specific local constraints
to the formulation that help addressing particular challenges.
We substantiate this claim of the widespread use of integer
programming by mentioning some of the works that use integer programming to arrive at a schedule for a round robin tournament.
Indeed, from the literature, it is clear that for national football leagues (which are predominantly organized according to a so-called double round robin format), integer programming-based techniques are used extensively to find schedules. Without claiming to be exhaustive we mention Alarc\'on et al.~\cite{AlarconEtAl2017}, Della Croce and Oliveri~\cite{DellaCroceOliveri2006}, Dur\'an et al~\cite{Duranetal2007,Duranetal2017,DuranEtAl2021}, Goossens and Spieksma~\cite{GoossensSpieksma2009}, Rasmussen~\cite{Rasmussen2008}, Recalde et al.~\cite{RecaldeEtAl2013}, Ribeiro and Urrutia~\cite{RibeiroUrrutia2012}. Other sport competitions that are organized in a round robin fashion (or a format close to a round robin) have also received ample attention: we mention Cocchi et al.~\cite{CocchiEtAl2018} and Raknes and Pettersen~\cite{RaknesPettersen2018} who use integer programming for scheduling volleyball leagues, Fleurent and Ferland~\cite{FleurentFerland1993} who use integer programming for scheduling a hockey league, Kim~\cite{Kim2019} and Bouzarth et~al.~\cite{bouzarth2021scheduling} for baseball leagues, Kostuk and Willoughby~\cite{KostukWilloughby2012} for Canadian football, Nemhauser and Trick~\cite{NemhauserTrick1998} and Westphal~\cite{Westphal2014} for basketball leagues. Further, there has been work on studying properties of the traditional formulation, among others,
by Trick~\cite{trick2002integer} and Briskorn and Drexl~\cite{bridre2009}. Well-known surveys are given by
Rasmussen and Trick~\cite{rastri2008}, Kendall et al.~\cite{kenetal2010} and Goossens and Spieksma~\cite{GoossensSpieksma2012}; we also refer to
Knust~\cite{knust}, who maintains an elaborate classification of literature on sports scheduling.
More recently, the international timetabling competition~\cite{ITC2021}
featured a round robin sports timetabling problem, and most of the
submissions for this competition used integer programming in some way to obtain a good
schedule.

All this shows that integer programming is one of the most preferred ways to find schedules for competitions organized via a round robin format.

In this paper, we aim to take a fresh look at the problem of finding an optimal schedule
for round robin tournaments using integer programming techniques. Depending upon how often a pair of teams is required to meet, different variations of a round robin tournament arise: in case each pair of teams meets once, the resulting format is called a Single Round Robin, in case each pair of teams is required to meet twice, we refer to the resulting variation as a Double Round Robin. These formats are the ones that occur most in practice; in general we speak of a $k$-Round Robin to describe the situation where each pair of teams is required to meet $k$ times.

We have organized the paper as follows. In Section~\ref{sec:models}, we precisely define the problem corresponding to the Single Round Robin tournament, and we present three integer programming formulations for it. We call them the traditional formulation (Section~\ref{sec:traditionalformulation}), the matching formulation (Section~\ref{sec:matchingformulation}), and the permutation formulation (Section~\ref{sec:permutationformulation}); the latter two formulations are, to the best of our knowledge, new. We show that their linear relaxations can be solved in polynomial time. We prove in Section~\ref{sec:strength} that the matching formulation is stronger than the other formulations. In Section~\ref{sec:inequalities} we provide a class of valid inequalities for the matching formulation. We show in Section~\ref{sec:kRR} how our results extend to the $k$-Round Robin tournament. In Section~\ref{sec:computationalresults}, we generate instances of our problem with two goals in mind: (i) to experimentally assess the quality of the bounds found by our models (Section~\ref{sec:computationalcomparison}), and (ii) to report on the performance of a branch-and-price algorithm (Section~\ref{sec:branchandprice}). We conclude in Section~\ref{sec:conclusion}.

\section{Problem definition and formulations}
\label{sec:models}

In this section, we provide a formal definition of our problem and introduce the necessary terminology and notation. We start by describing the so-called Single Round Robin (SRR) tournament, where every pair of teams has to meet exactly once, and we return to the general version of the problem, where every pair of teams has to meet $k$ times ($k \geq 1$), in Section~\ref{sec:kRR}.

Throughout the entire paper, we assume that~$n$ is an even integer that
denotes the number of teams; for reasons of convenience we assume $n \geq 4$. We denote the set of all teams by~$\teams$.
A \emph{match} is a set consisting of two distinct teams and the set of all
\emph{matches} is denoted by~$\matches$, in formulae, $\matches = \{
m=\{i,j\} : i,j \in \teams,\; i \neq j\}$. We denote, for each
$i \in \teams$, by $\matches_i = \{\{i,j\} : j \in \teams \setminus \{i\}\}$ the set of matches played by team $i$.
As we assume in this section that every pair of teams meets once, and as $n$ is even, the matches can be organized in~$n-1$ rounds, which we denote by~$\rounds$; hence, we deal in this section with a {\em compact} single round robin tournament.

Prepared with this terminology and notation, we are able to provide a
formal definition of the SRR problem.

\begin{problem} (SRR)
  \label{prob:srr}
  Given an even number~$n \geq 4$ of teams with corresponding matches~$\matches$,
  a set of~$n-1$ rounds~$\rounds$, as well as an integral cost~$c_{\match,r}$ for
  every match~$\match\in\matches$ and round~$r \in \rounds$, the \emph{single
    round robin (SRR)} problem is to find an assignment~$\mathcal{A}
  \subseteq \matches \times \rounds$ of matches to rounds that minimizes
  the cost~$\sum_{(\match,r) \in \mathcal{A}} c_{\match,r}$ such that every
  team plays a single match per round and each match is played in some round.
\end{problem}
Since the SRR problem is \NP-hard (see Easton~\cite{easton2002}, Briskorn \etal~\cite{briskorn2010round}, and Van Bulck and Goossens~\cite{bulgoo2020}), there does not exist a polynomial
time algorithm to find an optimal assignment unless~$\P = \NP$.
For this reason, several researchers have investigated integer programming (IP)
techniques for finding an optimal assignment of matches to rounds.
We follow this line of research and discuss three different IP formulations
for the SRR problem: a traditional formulation with polynomially many
variables and constraints (Section~\ref{sec:traditionalformulation}) as well as two formulations that involve
exponentially many variables (Sections~\ref{sec:matchingformulation} and \ref{sec:permutationformulation}).
To the best of our knowledge, the latter models have not been discussed in
the literature before.

\subsection{The traditional formulation}
\label{sec:traditionalformulation}

The \emph{traditional formulation} of the SRR problem has been discussed,
among others, by Trick~\cite{trick2002integer} and Briskorn and
Drexl~\cite{bridre2009}.
To model an assignment of matches to rounds, this formulation introduces,
for every match~$\match\in\matches$ and round~$r \in \rounds$, a binary
decision variable~$x_{\match,r}$ to model whether match~$\match$ is played
at round~$r$ ($x_{\match,r} = 1$) or not ($x_{\match,r} = 0$).
With these variables, problem SRR can be modeled as:
\begin{subequations}
  \makeatletter
  \def\@currentlabel{T}
  \makeatother
  \renewcommand{\theequation}{T\arabic{equation}}%
  \label{tra}
  \begin{align}
    \label{tra:obj}
    \min \sum_{\match\in\matches}\sum_{r \in \rounds} c_{\match,r}x_{\match,r} &&&\\
    \sum_{r \in R} x_{\match,r} &= 1, && \match\in\matches,\label{tra:matchplayed}\\
    \sum_{m \in \matches_i} x_{m,r} &= 1, && i \in \teams, r \in \rounds,\label{tra:teamplays}\\
    x_{\match,r} &\in \binary, && \match \in \matches, r \in \rounds.\label{tra:binary}
  \end{align}
\end{subequations}
Constraints~\eqref{tra:matchplayed} ensure that each pair of teams meets
once, and Constraints~\eqref{tra:teamplays} imply that each team
plays in each round.
This model has~$O(n^2)$ constraints and~$O(n^3)$ variables.
Note that Constraints~\eqref{tra:binary} can be replaced by~$x_{\match,r}
\in \Z_+$ as the upper bound~$x_{\match,r} \leq 1$ is implicitly imposed
via Constraints~\eqref{tra:matchplayed} and non-negativity of variables. The linear programming relaxation of \eqref{tra} arises when we replace~(\ref{tra:binary}) by $x_{m,r} \geq 0$; given an instance $I$ of SRR, we denote the resulting value by $v^{LP}_{tra}(I)$.

\subsection{The matching formulation}
\label{sec:matchingformulation}

Consider the complete graph that results when associating a node to each team, say
$K_n = (T, \matches)$. Clearly, a single round of a feasible schedule can be seen as a perfect matching in this graph. This observation allows us to build a matching based
formulation by introducing a binary variable for every perfect matching in $K_n$;
we denote the set of all perfect matchings in $K_n$ by~$\matchings$.

We employ a binary variable~$y_{\matching,r}$ for each perfect matching~$\matching\in\matchings$ and round~$r \in \rounds$.
If~\mbox{$y_{\matching,r} = 1$}, the model prescribes that matching~$\matching$ is used for the schedule of round~$r$, whereas \mbox{$y_{\matching,r} = 0$} encodes
that a different schedule is used.
To be able to represent the cost of round~$r \in R$, the total cost of all
matches in~$\matching$ is denoted by~$d_{\matching,r} \define
\sum_{\match\in\matching} c_{\match,r}$, which leads to the model
\begin{subequations}
  \makeatletter
  \def\@currentlabel{M}
  \makeatother
  \renewcommand{\theequation}{M\arabic{equation}}%
  \label{mat}
  \begin{align}
    \label{mat:obj}
    \min \sum_{\matching\in\matchings} \sum_{r \in \rounds}
    d_{\matching,r}y_{\matching,r} &&&\\
    \sum_{\matching \in \matchings} y_{\matching,r} &= 1, && r \in \rounds,\label{mat:matchingonround}\\
    \sum_{\substack{\matching \in \matchings\colon \\ \match \in \matching}}
    \sum_{r \in R} y_{\matching,r} &= 1, && \match \in \matches, \label{mat:matchplayed}\\
    y_{\matching,r} &\in \binary, && \matching \in \matchings, r \in \rounds.\label{mat:binary}
  \end{align}
\end{subequations}
Constraints~\eqref{mat:matchingonround} ensure that a matching is selected
in each round, while Constraints~\eqref{mat:matchplayed} enforce that each
pair of teams meets in some round.
Similarly to the traditional formulation, we can replace~\eqref{mat:binary}
by~$y_{\matching,r} \in \Z_+$.
In this way, the linear programming relaxation of \eqref{mat} arises when replacing~(\ref{mat:binary}) by $y_{\matching,r} \geq 0$; given an instance $I$ of SRR, the resulting value is denoted by $v^{LP}_{mat}(I)$. Notice that this formulation uses an exponential number of variables, as the number of matchings grows exponentially in $n$. Thus, a relevant question is whether we can find $v^{LP}_{mat}$ in polynomial time. The following observation shows that it can be answered affirmatively.

\begin{lemma}
  \label{lem:pricematch}
  The LP relaxation of the matching formulation~\eqref{mat} can be solved
  in polynomial time.
\end{lemma}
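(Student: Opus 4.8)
The plan is to exploit linear programming duality together with the polynomial-time equivalence of separation and optimization. Although the matching formulation~\eqref{mat} has exponentially many variables~$y_{\matching,r}$, its LP relaxation has only~$\card{\rounds} + \card{\matches} = O(n^2)$ equality constraints (apart from the sign constraints). Passing to the dual therefore yields a linear program with polynomially many variables but exponentially many constraints, which is precisely the regime in which the ellipsoid method applies, provided we can separate over these constraints in polynomial time. The label of the lemma already hints at the mechanism: the separation problem will turn out to be a pricing problem that amounts to computing a minimum-weight perfect matching.

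Concretely, I would introduce a free dual variable $\alpha_r$ for each round constraint~\eqref{mat:matchingonround} and a free dual variable $\beta_\match$ for each match constraint~\eqref{mat:matchplayed}. Since $d_{\matching,r} = \sum_{\match \in \matching} c_{\match,r}$, the dual inequality associated with the primal variable $y_{\matching,r}$ reads
\[
  \alpha_r + \sum_{\match \in \matching} \beta_\match \;\leq\; \sum_{\match \in \matching} c_{\match,r}, \qquad\text{equivalently}\qquad \alpha_r \;\leq\; \sum_{\match \in \matching}\bigl(c_{\match,r} - \beta_\match\bigr).
\]
For a fixed round $r$, the whole family of these inequalities, ranging over all $\matching \in \matchings$, is satisfied if and only if $\alpha_r$ does not exceed the minimum of $\sum_{\match \in \matching}(c_{\match,r} - \beta_\match)$ taken over all perfect matchings $\matching$ of $K_n$.

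The key step is to recognize this minimum as a \emph{minimum-weight perfect matching} problem in $K_n$ with edge weights $c_{\match,r} - \beta_\match$, which is solvable in polynomial time by Edmonds' blossom algorithm. Hence, given a candidate dual point $(\alpha,\beta)$, I can separate it by computing, for each of the $n-1$ rounds, a minimum-weight perfect matching and comparing its weight against $\alpha_r$; whenever the weight is smaller than $\alpha_r$ for some round, the optimal matching supplies a violated dual inequality, and otherwise the point is dual feasible. This yields a polynomial-time separation oracle for the dual feasible region.

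By the polynomial equivalence of separation and optimization (Grötschel, Lovász, and Schrijver), the ellipsoid method then solves the dual LP, and thus computes $v^{LP}_{mat}$, in time polynomial in $n$ and the encoding length of the costs $c_{\match,r}$. I expect the main obstacle to be technical rather than conceptual: one must check the standard regularity hypotheses of the ellipsoid method (polynomially bounded vertex encoding lengths and a bounding ball, together with the correct handling of the equality constraints and free dual variables), and one must recover an optimal \emph{primal} solution of polynomial support. The latter can be done by restricting~\eqref{mat} to the polynomially many matchings produced during the separation calls and re-solving the resulting small LP, which is the primal counterpart of the pricing/column-generation viewpoint suggested by the statement.
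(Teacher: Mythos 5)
Your proposal is correct and follows essentially the same route as the paper: dualize the LP relaxation with variables $\alpha_r$, $\beta_\match$, observe that separating the dual constraints for a fixed round reduces to a (min-/max-)weight perfect matching problem solvable by Edmonds' blossom algorithm, and invoke the Gr\"otschel--Lov\'asz--Schrijver equivalence of separation and optimization. The only differences are cosmetic (you phrase the matching subproblem as minimization rather than maximization) or additional care the paper omits (ellipsoid regularity conditions and recovering a primal solution of polynomial support).
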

\begin{proof}
  Due to the celebrated result by Gr\"otschel et al.~\cite{GroetschelLovaszSchrijver1981}, it is sufficient to show
  that the separation problem for
  the constraints of the dual of the linear relaxation of Model~\eqref{mat} can be solved in polynomial time.
  To avoid an exponential number of variables in the dual, we replace
  Constraint~\eqref{mat:binary} by~$y_{\matching,r} \in \Z_+$ as explained
  above.
  Then, by introducing dual variables~$\alpha_r$, $r \in \rounds$, corresponding to Constraints~\eqref{mat:matchingonround} and~$\beta_\match$, $\match\in\matches$, corresponding to Constraints~\eqref{mat:matchplayed}, the constraints of the dual of the LP
  relaxation of Model~\eqref{mat} are:
  \begin{align*}
    \alpha_r + \sum_{\match \in \matching} \beta_{\match} & \leq d_{\matching,r},
    && \matching\in\matchings, r \in \rounds.
  \end{align*}
  Given values for the dual variables, say~$(\bar{\alpha}, \bar{\beta})$, the separation problem is to decide whether it satisfies all dual constraints.
  For fixed~$r \in \rounds$, we show that this problem can be solved in
  polynomial time.
  Thus, the assertion follows as there are only~$O(n)$ rounds.

  Indeed, if~$r \in \rounds$ is fixed, the problem reduces to check whether there
  exists a matching~$\matching\in\matchings$ such that
  \[
    \bar{\alpha}_r + \sum_{\match\in\matching} \bar{\beta}_\match
    >
    d_{\matching,r} = \sum_{\match\in\matching} c_{\match,r}
    \quad
    \Leftrightarrow
    \quad
    \sum_{\match \in \matching}(\bar{\beta}_\match - c_{\match,r}) > -\bar{\alpha}_r.
  \]
  The latter inequality asks whether there exists a perfect matching of teams with
  weight greater than~$-\bar{\alpha}_r$, where an edge~$\match$ between two
  teams is assigned weight~$(\bar{\beta}_\match - c_{\match,r})$.
  This problem can be solved in polynomial time by Edmonds' blossom
  algorithm~\cite{edmonds1965maximum,edmonds1965paths}, which concludes the
  proof.
\end{proof}

\subsection{The permutation formulation}

\label{sec:permutationformulation}
Instead of fixing the schedule of a round,
the \emph{permutation formulation} fixes, for a given team, the order of the teams against which the given team plays its successive matches.
That is, it introduces a variable for each team~$i$ and each permutation
of~$\teams \setminus \{ i \}$.
We denote the set of all such permutations by~$\permswo{i}$.
Moreover, for a team $j \in \teams$ and round $r \in \rounds$,
denote the set of all permutations where $j$ occurs at position $r$ in the
permutation by $\permswotor{i}{j}{r}$.
Permutations from $\permswotor{i}{j}{r}$ thus encode that team~$i$ plays
against team~$j$ on round $r$.
For a permutation $\pi \in \permswo{i}$ and round~$r \in R$, we refer to
the opponent of team~$i$ at round~$r$ as~$\pi_r \in \teams \setminus
\{i\}$.
The cost of a schedule encoded via permutations~$\permswo{i}$ for a team~$i
\in \teams$ is then given by~$e_{i,\pi} \define \sum_{r \in \rounds}
c_{\{i,\pi_r\}, r}$.
Using binary variables~$z_{i,\pi}$, where~$i \in \teams$ and~$\pi \in
\permswo{i}$, that encode whether~$i$ plays against its opponents in
order~$\pi$ ($z_{i,\pi} = 1$) or not ($z_{i,\pi}=0$), the permutation
formulation is
\begin{subequations}
  \makeatletter
  \def\@currentlabel{P}
  \makeatother
  \renewcommand{\theequation}{P\arabic{equation}}%
  \label{per}
  \begin{align}
    \min \frac{1}{2}\sum_{i \in \teams}\sum_{\pi \in \permswo{i}} e_{i,\pi}
    z_{i,\pi} &&& \label{per:obj}\\
    \sum_{\pi \in \permswo{i}} z_{i,\pi} &= 1, &&
    i \in \teams, \label{per:scheduleforteam}\\
    \sum_{\pi \in \permswotor{i}{j}{r}} z_{i,\pi} &= \sum_{\pi \in \permswotor{j}{i}{r}} z_{j,\pi},&&
    \{i, j\} \in \matches, r \in \rounds,\label{per:linking}\\
    z_{i,\pi} &\in \binary, && i \in \teams, \pi \in \permswo{i}.\label{per:binary}
  \end{align}
\end{subequations}
Constraints~\eqref{per:scheduleforteam}
ensure that a permutation is selected for each team,
while Constraints~\eqref{per:linking} enforce that,
given a round and a pair of teams, these teams meet in that round,
or they do not meet in that round.
Due to rescaling the objective by~$\frac{1}{2}$, we find the cost of an optimal SRR schedule.
Moreover, we can again replace Constraint~\eqref{per:binary} by~$z_{i,\pi}
\in \Z_+$.
The linear programming relaxation of \eqref{per} then arises when replacing Constraints~(\ref{per:binary}) by $z_{i,\pi} \geq 0$; given an instance $I$ of SRR, we denote the resulting value by $v^{LP}_{per}(I)$.

Since this model has~$n!$ variables, we again investigate whether its
LP relaxation can be solved efficiently.
\begin{lemma}
  \label{lem:pricepermutation}
  The LP relaxation of the permutation formulation~\eqref{per} can be
  solved in polynomial time.
\end{lemma}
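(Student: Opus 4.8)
The plan is to mirror the proof of Lemma~\ref{lem:pricematch} and
invoke the equivalence of separation and optimization due to
Gr\"otschel, Lov\'asz, and Schrijver~\cite{GroetschelLovaszSchrijver1981}.
Since the permutation formulation~\eqref{per} has exponentially many
variables (one per permutation), I would work with its dual and show
that the dual separation problem is solvable in polynomial time. First I
would introduce dual variables, say~$\gamma_i$ for each team~$i \in
\teams$ corresponding to Constraints~\eqref{per:scheduleforteam} and, for
each match~$\{i,j\} \in \matches$ and round~$r \in \rounds$, a variable
(say~$\delta_{\{i,j\},r}$) corresponding to the linking
Constraints~\eqref{per:linking}. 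After replacing~\eqref{per:binary}
by~$z_{i,\pi} \in \Z_+$ as suggested in the text, I would write down the
resulting dual constraint associated with each primal
variable~$z_{i,\pi}$; because~$e_{i,\pi} = \sum_{r \in \rounds}
c_{\{i,\pi_r\},r}$ decomposes additively over rounds, the dual constraint
for a fixed team~$i$ and permutation~$\pi$ will likewise decompose into a
sum over rounds of terms depending only on the pair~$(r, \pi_r)$.

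The key step is to recognize that, for a fixed team~$i$, separating over
all permutations~$\pi \in \permswo{i}$ is an assignment (bipartite
perfect matching) problem. The permutation~$\pi$ assigns to each
round~$r \in \rounds$ a distinct opponent~$\pi_r \in \teams \setminus
\{i\}$; since~$|\rounds| = n-1 = |\teams \setminus \{i\}|$, this is
exactly a bijection between rounds and opponents. Thus I would set up a
complete bipartite graph with rounds on one side and the teams~$\teams
\setminus \{i\}$ on the other, where the weight of edge~$(r,j)$ encodes
the contribution of "team~$i$ plays~$j$ in round~$r$" to the dual
constraint. Deciding whether some permutation violates its dual
constraint then reduces to finding a maximum (or minimum) weight perfect
matching in this bipartite graph and comparing its value against the
relevant right-hand side. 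This can be done in polynomial time by the
Hungarian algorithm. Repeating this over all~$n$ teams gives a
polynomial-time separation oracle for the full dual, and hence, by
Gr\"otschel et al., a polynomial-time algorithm for the LP relaxation.

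The main obstacle I anticipate is bookkeeping the dual correctly,
specifically how the linking Constraints~\eqref{per:linking} contribute
to the dual constraint of a given~$z_{i,\pi}$. Each such constraint is
indexed by an unordered pair~$\{i,j\}$ and a round~$r$, but it couples
the variables of team~$i$ with those of team~$j$ with opposite signs;
care is needed to determine, for the dual constraint belonging to~$(i,
\pi)$, precisely which~$\delta_{\{i,j\},r}$ appear and with which sign,
so that the edge weight assigned to~$(r,j)$ in the bipartite graph is the
correct combination of~$\gamma_i$, the relevant dual value for the pair
and round, and the original cost contribution. Once these weights are
pinned down, the reduction to a bipartite assignment problem is
immediate and the polynomial-time solvability follows directly, in
complete analogy with the matching case. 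I would close by remarking that
the number of separation calls is polynomial (at most~$n$ assignment
problems per round set, hence~$O(n)$ overall per iteration), so the
overall procedure runs in polynomial time.
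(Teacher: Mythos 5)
Your proposal is correct and follows essentially the same route as the paper: dualize after relaxing~\eqref{per:binary} to~$z_{i,\pi} \in \Z_+$, and for each fixed team~$i$ reduce dual separation to a maximum weight perfect matching in the complete bipartite graph between~$\teams \setminus \{i\}$ and~$\rounds$, solved~$n$ times. The sign bookkeeping you flag as the main obstacle is resolved in the paper exactly as you anticipate, by normalizing each linking constraint~\eqref{per:linking} so that the team pair is written with~$i < j$, which makes~$\beta_{\{i,\pi_r\},r}$ enter the edge weight with a~$+$ sign when~$i < \pi_r$ and a~$-$ sign otherwise.
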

\begin{proof}
  As in the proof of Lemma~\ref{lem:pricematch}, it is sufficient to show
  that the separation problem corresponding to the constraints of the dual of the relaxation of the permutation formulation can be
  solved in polynomial time.
  Again, to avoid exponentially many variables in the dual, we
  replace~\eqref{per:binary} by~$z_{i,\pi} \in \Z_+$.
  We introduce dual variables~$\alpha_i$ for each constraint of
  type~\eqref{per:scheduleforteam} and~$\beta_{\{i,j\},r}$ for each
  constraint of type~\eqref{per:linking}.
  To normalize Constraint~\eqref{per:linking}, we assume it to be given
  by~$\sum_{\pi \in \permswotor{i}{j}{r}} z_{i,\pi} - \sum_{\pi \in
    \permswotor{j}{i}{r}} z_{j,\pi} = 0$ with~$i < j$.
  Then, the dual constraints are given by
  \begin{align*}
    \alpha_i
    +
    \sum_{\substack{r \in \rounds\colon\\ i < \pi_r}} \beta_{\{i,\pi_r\},r}
    -
    \sum_{\substack{r \in \rounds\colon\\ i > \pi_r}} \beta_{\{i,\pi_r\},r}
    &\leq
    \frac{1}{2}e_{i,\pi}
    &&
    i \in \teams, \pi \in \permswo{i}.
  \end{align*}
  If~$i \in \teams$ is fixed, the separation problem for dual
  values~$(\bar{\alpha},\bar{\beta})$ is to decide whether there exists a
  permutation~$\pi \in \permswo{i}$ such that
  \[
    \sum_{\substack{r \in \rounds\colon\\ i < \pi_r}} \bar{\beta}_{\{i,\pi_r\},r}
    -
    \sum_{\substack{r \in \rounds\colon\\ i > \pi_r}} \bar{\beta}_{\{i,\pi_r\},r}
    -
    \frac{1}{2} \sum_{r \in \rounds} c_{\{i,\pi_r\},r}
    >
    -\bar{\alpha}_i
  \]
  due to the definition of~$e_{i,\pi}$.
  To answer this question, it is sufficient to find a permutation
  maximizing the left-hand side expression.
  Such a permutation can be found by computing a maximum weight perfect matching in
  the complete bipartite graph with node bipartition~$(\teams \setminus
  \{i\}) \cup \rounds$ and edge weights defined for each~$j \in \teams \setminus \{i\}$ and~$r \in \rounds$ by
  \[
    w_{j,r} =
    \begin{cases}
      -\frac{1}{2} c_{\{i,j\},r} + \bar{\beta}_{\{i,j\},r}, & \text{if } i < j,\\
      -\frac{1}{2} c_{\{i,j\},r} - \bar{\beta}_{\{i,j\},r}, & \text{otherwise.}
    \end{cases}
  \]
  Since this problem can be solved in polynomial time, the assertion
  follows by solving this problem for each of the~$n$ teams.
\end{proof}

\section{Comparing the strength of the different formulations}
\label{sec:strength}

In the previous section, we have introduced three different models for
finding an optimal schedule for problem SRR.
While the traditional formulation contains both polynomially many variables
and constraints, the matching and permutation formulation make use of an
exponential number of variables.
The aim of this section is to investigate whether the increase in the
number of variables in comparison with the traditional formulation leads to
a stronger formulation.
We measure the strength of a formulation based on the value of its LP
relaxation, where a higher value of the LP relaxation indicates a stronger
formulation as the LP relaxation's value is closer to the optimum value of
the integer program, as encapsulated by the following definitions.

\begin{definition}
  Let~$f$ and~$g$ be mixed-integer programming formulations of the SRR
  problem and denote by~$\lprelaxation{f}(I)$ and~$\lprelaxation{g}(I)$ the
  value of the respective LP relaxations for an instance~$I$ of~SRR.
  \begin{itemize}
  \item We say that~$f$ and~$g$ are \emph{relaxation-equivalent}
    if, for each instance $I$ of problem SRR,
    the value of the linear programming relaxations are equal, i.e.,
    $\lprelaxation{f}(I) = \lprelaxation{g}(I)$.

  \item We say that~$f$ is stronger than (or dominates) $g$ if
    \begin{enumerate*}[label=(\roman*), ref=(\roman*)]
    \item for each instance $I$ of problem SRR,
      $\lprelaxation{f}(I) \geq \lprelaxation{g}(I)$,
      and
    \item there exists an instance $I$ of problem SRR
      for which~$\lprelaxation{f}(I) > \lprelaxation{g}(I)$.
    \end{enumerate*}
  \end{itemize}
\end{definition}
We now proceed by formally comparing the strength of the formulations from
Section~\ref{sec:models} using the terminology of these definitions.
We state our results using three lemmata, and summarize all our results in Theorem~\ref{th:overallstrength}.

First, we show that the traditional and permutation formulation have
equivalent LP relaxations.
\begin{lemma}
  \label{lem:equitraandper}
  The permutation formulation~\eqref{per} is relaxation-equivalent to the traditional
  formulation~\eqref{tra}.
\end{lemma}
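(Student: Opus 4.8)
The plan is to show the two LP relaxations attain the same optimal value by exhibiting an objective-preserving correspondence between their feasible regions, thereby proving both inequalities $\lprelaxation{tra}(I) \leq \lprelaxation{per}(I)$ and $\lprelaxation{per}(I) \leq \lprelaxation{tra}(I)$. The natural bridge is the projection that reads off, from a permutation solution, how often team~$i$ plays opponent~$j$ in round~$r$:
\[
  x_{\{i,j\},r} \define \sum_{\pi \in \permswotor{i}{j}{r}} z_{i,\pi}.
\]
The first thing to check is that this is well defined as a quantity attached to the match~$\{i,j\}$ rather than to the ordered pair~$(i,j)$: this is exactly what the linking constraints~\eqref{per:linking} guarantee, since they equate $\sum_{\pi \in \permswotor{i}{j}{r}} z_{i,\pi}$ with $\sum_{\pi \in \permswotor{j}{i}{r}} z_{j,\pi}$.

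For the direction $\lprelaxation{tra}(I) \leq \lprelaxation{per}(I)$, I would take any feasible~$z$ of the permutation relaxation and verify that the projected~$x$ is feasible for the traditional relaxation with the same objective value. Nonnegativity is immediate. Summing the projection over opponents for a fixed round collapses, via~\eqref{per:scheduleforteam}, to $\sum_{\pi} z_{i,\pi} = 1$, which is Constraint~\eqref{tra:teamplays}; summing over rounds for a fixed pair uses that each permutation places a given opponent in exactly one round, again giving~$1$, which is Constraint~\eqref{tra:matchplayed}. For the objective, substituting the projection into $\tfrac{1}{2} \sum_{i \in \teams} \sum_{\pi \in \permswo{i}} e_{i,\pi} z_{i,\pi}$ and using $e_{i,\pi} = \sum_{r \in \rounds} c_{\{i,\pi_r\},r}$ yields $\tfrac{1}{2} \sum_{i \in \teams} \sum_{r \in \rounds} \sum_{j \neq i} c_{\{i,j\},r} x_{\{i,j\},r}$, where the factor~$\tfrac{1}{2}$ exactly cancels the double counting of each match as~$(i,j)$ and~$(j,i)$, recovering the traditional objective $\sum_{\match \in \matches} \sum_{r \in \rounds} c_{\match,r} x_{\match,r}$.

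The reverse direction $\lprelaxation{per}(I) \leq \lprelaxation{tra}(I)$ is the main step, and it is where I expect the only real work. Given a feasible~$x$ of the traditional relaxation, I must reconstruct permutation variables~$z$ projecting back to it. Fix a team~$i$ and form the $(n-1)\times(n-1)$ matrix whose $(j,r)$ entry is~$x_{\{i,j\},r}$. Constraints~\eqref{tra:matchplayed} make its row sums equal~$1$ and Constraints~\eqref{tra:teamplays} make its column sums equal~$1$, so it is doubly stochastic. By the Birkhoff--von Neumann theorem it is a convex combination of permutation matrices, each of which is precisely a permutation $\pi \in \permswo{i}$; the coefficients define nonnegative~$z_{i,\pi}$ summing to~$1$ (Constraint~\eqref{per:scheduleforteam}) and satisfying $\sum_{\pi \in \permswotor{i}{j}{r}} z_{i,\pi} = x_{\{i,j\},r}$ by construction.

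Finally, the linking constraints~\eqref{per:linking} for the reconstructed~$z$ hold automatically, because both of their sides equal the same~$x_{\{i,j\},r}$ by the previous identity applied to~$i$ and to~$j$; and the objective computation from the forward direction runs identically in reverse. Combining the two inequalities gives $\lprelaxation{per}(I) = \lprelaxation{tra}(I)$ for every instance~$I$, which is the claimed relaxation-equivalence. The only nontrivial ingredient is the Birkhoff--von Neumann decomposition; everything else is constraint bookkeeping together with the $\tfrac{1}{2}$ objective normalization.
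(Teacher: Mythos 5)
Your proposal is correct and takes essentially the same route as the paper's proof: the same projection $x_{\{i,j\},r} = \sum_{\pi \in \permswotor{i}{j}{r}} z_{i,\pi}$ (well-defined via~\eqref{per:linking}) in one direction, and the per-team Birkhoff--von Neumann decomposition of the doubly stochastic matrices $X^i$ in the other, with the same constraint bookkeeping and $\tfrac{1}{2}$ normalization for the objective. The only cosmetic difference is that you phrase the conclusion as two value inequalities while the paper phrases it as an objective-preserving correspondence of feasible solutions; these amount to the same argument.
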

\begin{proof}
  To prove this lemma, we show that there is a one-to-one correspondence of
  feasible solutions of the traditional formulation's and the permutation
  formulation's LP relaxations that preserves the objective value.
  First, we construct a solution of the LP relaxation of the traditional
  formulation from a solution~$z$ of the LP relaxation of the permutation
  formulation.
  To this end, define for each~$\{i,j\} \in \matches$ and~$r \in \rounds$ a
  solution~$x \in \R^{\matches \times \rounds}$ via~$x_{\{i,j\},r} =
  \sum_{\pi \in \permswotor{i}{j}{r}} z_{i,\pi}$.
  Note that~$x$ is non-negative as all~$z$-variables are non-negative.
  Moreover, it is well-defined as~$\sum_{\pi \in \permswotor{i}{j}{r}}
  z_{i,\pi} = \sum_{\pi \in \permswotor{j}{i}{r}} z_{j,\pi}$ due
  to~\eqref{per:linking}.
  Finally, all constraints of type~\eqref{tra:matchplayed}
  and~\eqref{tra:teamplays} are satisfied since
  \begin{align*}
    &\text{for each } \match \in \matches: &
    \sum_{r \in \rounds} x_{\{i,j\},r}
    =
    \sum_{r \in \rounds} \sum_{\pi \in \permswotor{i}{j}{r}} z_{i,\pi}
    =
    \sum_{\pi \in \bigcup_{r \in \rounds} \permswotor{i}{j}{r}} z_{i,\pi}
    =
    \sum_{\pi \in \permswo{i}} z_{i, \pi}
    &\overset{\eqref{per:scheduleforteam}}{=}
      1,\\
    &\text{for each } i \in \teams,\; r \in \rounds: 
    &\sum_{j \in T \setminus \{i\}} x_{\{i,j\},r}
    =
    \sum_{j \in T \setminus \{i\}} \sum_{\pi \in \permswotor{i}{j}{r}}z_{i,\pi}
    =
    \sum_{\pi \in \Pi^{-i}} z_{i, \pi}
    &\overset{\eqref{per:scheduleforteam}}{=}
      1.
  \end{align*}
  We conclude the proof by constructing a feasible solution for the LP
  relaxation of the permutation formulation from a feasible solution~$x$ of
  the traditional formulation's LP relaxation.

  Let~$x$ be such a solution and let~$i \in \teams$.
  Consider the matrix~$X^i \in \R^{(\teams \setminus \{i\}) \times
    \rounds}$ with entries~$X^i_{j,r} = x_{\{i,j\},r}$.
  Due to all constraints of the traditional formulation's LP relaxation,
  $X^i$ is a doubly stochastic matrix and is thus contained in the Birkhoff
  polytope, see~\cite{Ziegler1995}.
  Consequently, $X^i$ can be written as a convex combination of all
  permutation matrices.
  That is, if~$P^{i,\pi}$ is the permutation matrix associated with~$\pi \in
  \permswo{i}$, there exist multipliers~$\lambda^i_\pi \geq 0$, $\pi \in
  \permswo{i}$, such that~$X^i = \sum_{\pi \in \permswo{i}} \lambda^i_\pi
  P^{i,\pi}$ and~$\sum_{\pi \in \permswo{i}} \lambda^i_\pi = 1$.
  Based on these multipliers, we define a solution~$z$ of the permutation
  formulation via~$z_{i,\pi} = \lambda^i_{\pi}$.
  To conclude the proof, we need to show that this solution~$z$ is feasible
  for the permutation formulation's LP relaxation and has the same
  objective value as~$x$.
  Observe that~$z$ is non-negative since all~$\lambda$'s are non-negative.
  Constraints~\eqref{per:scheduleforteam} and~\eqref{per:linking} are
  satisfied as
  \begin{align*}
    &\text{for each } i \in \teams:&
    \sum_{\pi\in\permswo{i}} z_{i,\pi}
    &=
    \sum_{\pi\in\permswo{i}} \lambda^i_\pi
    =
    1,\\
    &\text{for each } \{i,j\} \in\matches,\; r \in \rounds:&
    \sum_{\pi \in \permswotor{i}{j}{r}} z_{i,\pi}
    &=
    \sum_{\pi \in \permswotor{i}{j}{r}} \lambda^i_\pi
    =
    x_{\{i,j\},r}
    =
    \sum_{\pi \in \permswotor{j}{i}{r}} \lambda^j_\pi
    =
    \sum_{\pi\in\permswo{j}} z_{j,\pi}
  \end{align*}
  since~$\sum_{\pi\in\permswo{i}} \lambda^i_\pi = 1$ and~$x_{\{i,j\},r}$ is a
  convex combination of permutation matrices that assign team~$j$ (or~$i$)
  to round~$r$, respectively.
  Consequently, $z$ is feasible for the permutation formulation's LP relaxation.
  Finally, both~$x$ and~$z$ have the same objective value because
  \begin{align*}
    \frac{1}{2} \sum_{i \in\teams} \sum_{\pi \in \permswo{i}} e_{i,\pi}z_{i,\pi}
    &=
    \frac{1}{2} \sum_{i \in\teams} \sum_{\pi \in \permswo{i}} e_{i,\pi}\lambda^i_\pi
    =
    \frac{1}{2} \sum_{i \in\teams} \sum_{\pi \in \permswo{i}} \sum_{r \in \rounds} c_{\{i,\pi_r\},r}\lambda^i_\pi\\
    &=
    \frac{1}{2} \sum_{i \in\teams} \sum_{j \in \teams\setminus\{i\}} \sum_{r \in \rounds} c_{\{i,j\},r}
      \sum_{\substack{\pi\in\permswo{i}\colon\\ \pi_r = j}} \lambda^i_\pi P^{i,\pi}_{\pi_r,r}
    =
    \frac{1}{2} \sum_{i \in\teams} \sum_{j \in \teams\setminus\{i\}} \sum_{r \in \rounds} c_{\{i,j\},r} x_{\{i,j\},r}\\
    &=
    \sum_{\{i,j\} \in \matches}\sum_{r \in \rounds} c_{\{i,j\},r}x_{\{i,j\},r}.
  \end{align*}
  which proves that both formulations are relaxation-equivalent.
\end{proof}
Next, we turn our focus to the matching formulation and compare it with the
traditional formulation (and thus, by the previous lemma, also with the
permutation formulation).
\begin{lemma}
  \label{lem:strength}
  For each $n \geq 6$, the matching
  formulation~\eqref{mat} is stronger than
  the traditional formulation~\eqref{tra}.
\end{lemma}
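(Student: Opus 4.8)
The plan is to establish the two defining properties of ``stronger'' in turn: first that $v^{LP}_{mat}(I) \ge v^{LP}_{tra}(I)$ for \emph{every} instance $I$, and then that this inequality is strict for a carefully chosen instance on $n$ teams. For the dominance part I would exhibit a value-preserving projection from the feasible region of the relaxation of~\eqref{mat} onto that of~\eqref{tra}. Given a feasible $y$, define $x_{\match,r} \define \sum_{\matching \in \matchings \colon \match \in \matching} y_{\matching,r}$; non-negativity is immediate, \eqref{tra:matchplayed} is a direct rewriting of~\eqref{mat:matchplayed}, and \eqref{tra:teamplays} follows from~\eqref{mat:matchingonround} after noting that each perfect matching contains exactly one edge incident to a fixed team $i$, so $\sum_{\match \in \matches_i} x_{\match,r}$ collapses to $\sum_{\matching} y_{\matching,r} = 1$. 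The objective is preserved because $d_{\matching,r} = \sum_{\match \in \matching} c_{\match,r}$ lets one swap summation order and recover $x_{\match,r}$. Since every matching-feasible point maps to a traditional-feasible point of equal cost, the traditional minimum is at most the matching minimum, giving $v^{LP}_{tra}(I) \le v^{LP}_{mat}(I)$.

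For the strict part I would construct, for each even $n \ge 6$, an instance whose gap is driven by an odd-set (blossom) effect invisible to the traditional formulation. Fix $S = \{1,2,3\}$ and let $\delta(S)$ be the set of matches with exactly one endpoint in $S$; crucially both $S$ and $\teams \setminus S$ have odd cardinality. Set $c_{\match,1} = 1$ for every $\match \in \delta(S)$ and let all other costs be $0$. The decisive fact is that every perfect matching of $K_n$ contains at least one edge of $\delta(S)$, since an odd vertex set cannot be matched internally. Hence for any feasible $y$ the objective of~\eqref{mat} equals $\sum_{\matching} \bigl(\text{number of }\delta(S)\text{-edges in }\matching\bigr)\,y_{\matching,1} \ge \sum_{\matching} y_{\matching,1} = 1$ by~\eqref{mat:matchingonround}, so $v^{LP}_{mat} \ge 1$.

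It then remains to show the traditional LP attains cost $0$, which I would do with an explicit fractional schedule. Let $P$ put weight $\tfrac12$ on each edge inside $S$ and weight $\tfrac{1}{n-4}$ on each edge inside $\teams \setminus S$; both are complete graphs of odd order, these uniform weights make every vertex degree equal to $1$, and $P$ vanishes on $\delta(S)$. I would use $P$ in round $1$ and $\tfrac{1}{n-2}(\mathbf 1 - P)$ in each of the remaining $n-2$ rounds, where $\mathbf 1$ is the all-ones edge vector. A degree count shows $\tfrac{1}{n-2}(\mathbf 1 - P)$ is again a fractional perfect matching (each vertex has $\mathbf 1$-degree $n-1$ and $P$-degree $1$), and $P_\match \le 1$ for all $\match$ when $n \ge 6$ ensures non-negativity; the $n-1$ rounds sum to $\mathbf 1$, so~\eqref{tra:matchplayed} holds. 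This schedule is feasible with objective $0$, since round $1$ carries no weight on $\delta(S)$, whence $v^{LP}_{tra} = 0 < 1 \le v^{LP}_{mat}$. Combined with the dominance part, this proves the matching formulation is stronger.

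The projection is routine; the real work is the second part. The two points to get right are the odd-cut lower bound for the matching LP (where $n \ge 6$ is used, so that $S$ and its complement are both nontrivial odd sets) and the explicit cost-$0$ completion for the traditional LP, which I expect to be the main obstacle because it must be exhibited uniformly for all even $n \ge 6$ rather than just $n=6$.
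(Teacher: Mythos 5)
Your proof is correct, and while the first half coincides with the paper's, the second half takes a genuinely different route. The dominance part (projecting $y$ to $x_{\match,r} = \sum_{\matching \ni \match} y_{\matching,r}$ and checking feasibility and cost preservation) is exactly the paper's argument. For the strict-gap instance, however, the paper proceeds by a three-case analysis ($n \ge 10$, $n=6$, $n=8$), each time decomposing the team set into odd \emph{cycles} (two triangles plus an even cycle; two triangles; a triangle plus a $5$-cycle), putting cost $1$ on all non-cycle edges in rounds $1$ and $2$, and assigning weight $\tfrac12$ to cycle edges in those rounds; the parity of the leftover teams is what forces the case split. You instead use a single construction uniform in $n$: costs concentrated on the odd cut $\delta(\{1,2,3\})$ in round $1$ only, a fractional perfect matching $P$ supported inside the two odd \emph{cliques} (uniform weights $\tfrac12$ and $\tfrac{1}{n-4}$, which give degree exactly $1$ at every vertex), and the clean completion $\tfrac{1}{n-2}(\mathbf{1}-P)$ on the remaining $n-2$ rounds, whose feasibility is a one-line degree count. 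Both proofs rest on the same underlying fact—fractional LP points can hide all their weight inside odd sets, while every integral perfect matching must cross an odd cut, which is precisely what your bound $d_{\matching,1} = \card{\matching \cap \delta(S)} \ge 1$ and the paper's "every matching contains an edge between $\{1,2,3\}$ and the rest" both express. What your version buys is the elimination of the case analysis and an explicit, uniformly verifiable zero-cost traditional solution (the paper's piecewise definition of $x$ is in fact stated sloppily there: as written, its "otherwise" case double-counts matches in $P$, though the intended solution is clear); what the paper's version buys is that its cycle-based structure directly foreshadows the odd-cut inequalities it adds to the traditional formulation in the following section.
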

\begin{proof}
  First, we show that we can transform any feasible solution of the
  matching formulation's LP relaxation to a feasible solution of the
  traditional formulation's LP relaxations.
  Afterwards, to show that the matching formulation is stronger than the
  traditional formulation, we show that, for any even~$n \geq 6$, there
  exists an instance of SRR for which the LP relaxation of the matching
  formulation has a strictly larger value than the traditional
  formulation's LP relaxation.

  Let~$y$ be a feasible solution of the matching formulation's LP
  relaxation.
  We construct a solution~$x$ for the traditional formulation by
  setting~$x_{\match,r} = \sum_{\matching\in\matchings\colon
    \match\in\matching} y_{\match,r}$.
  Since~$y$ is non-negative, also~$x$ is non-negative.
  Moreover, Conditions~\eqref{tra:matchplayed} and~\eqref{tra:teamplays}
  are satisfied as
  \begin{align*}
    &\text{for each } \match\in\matches:&
    \sum_{r \in \rounds} x_{\match,r}
    &=
    \sum_{r \in \rounds} \sum_{\substack{\matching\in\matchings\colon\\ \match\in\matching}} y_{\matching,r}
    \overset{~\eqref{mat:matchplayed}}{=}
    1,\\
    &\text{for each } i\in\teams,\; r \in\rounds:&
    \sum_{j \in T \setminus \{i\}} x_{\{i,j\},r}
    &=
    \sum_{j \in T \setminus \{i\}} \sum_{\substack{\matching\in\matchings\colon\\ \{i,j\}\in\matching}} y_{\{i,j\},r}
    =
    \sum_{\matching\in\matchings} y_{\matching,r}
    \overset{~\eqref{mat:matchingonround}}{=}
    1.
  \end{align*}
  Finally, both~$x$ and~$y$ have the same objective value as
  \[
    \sum_{\match\in\matches}\sum_{r\in\rounds} c_{\match,r}x_{\match,r}
    =
    \sum_{\match\in\matches}\sum_{r\in\rounds} c_{\match,r}\sum_{\substack{\matching\in\matchings\colon\\ \match\in\matching}} y_{\matching,r}
    =
    \sum_{\matching\in\matchings}\sum_{r\in\rounds} \sum_{\match\in\matching}c_{\match,r} y_{\matching,r}
    =
    \sum_{\matching\in\matchings}\sum_{r\in\rounds} d_{\matching,r} y_{\matching,r},
  \]
  that is, the traditional formulation cannot be stronger than the matching
  formulation.

  To prove that the matching formulation dominates the traditional
  formulation for~$n\geq 6$ even, we distinguish three cases.
  In the first case, assume~$n \geq 10$.
  Consider the pairs of teams given by
  \[
    P
    =
    \big\{\{1,2\}, \{2,3\}, \{1,3\}\big\}
    \cup
    \big\{\{4,5\}, \{5,6\}, \{4,6\}\big\}
    \cup
    \big\{\{7,8\},\{8,9\},\dots,\{n-1,n\},\{7,n\}\big\}.
  \]
  Interpreting~$P$ as the edges of an undirected graph, $P$ defines three
  connected components consisting of two 3-cycles and an even cycle.
  We construct an instance of the SRR problem by specifying the cost
  function~$c \in \R^{\matches\times\rounds}$ via
  \[
    c_{\match,r}
    =
    \begin{cases}
      1, & \text{if } \match \notin P \text{ and } r \in \{1,2\},\\
      0, & \text{otherwise.}
    \end{cases}
  \]
  It is easy to verify that~$x \in \R^{\matches \times \rounds}$ given by
  \[
    x_{\match,r}
    =
    \begin{cases}
      \tfrac{1}{2}, & \text{if } \match \in P \text{ and } r \in \{1,2\},\\
      0, & \text{if } \match \notin P \text{ and } r \in \{1,2\},\\
      \tfrac{1}{n-3}, & \text{otherwise},
    \end{cases}
  \]
  is feasible for the LP relaxation of the traditional formulation and has
  objective value~0.
  Hence, $x$ is optimal.

  Solving the LP relaxation of the matching formulation for this instance,
  however, results in an objective value that is at least~2.
  Indeed, each perfect matching $\matching\in\matchings$
  contains at least one match $\match \in \matching$
  with $\match \in \{\{i,j\} : (i,j) \in\{1, 2, 3 \} \times \{4, 5, \dots, n\}\}$.
  Since $c_{\match, 1} = c_{\match,2} = 1$ for such a match, it follows
  that in both rounds~1 and~2, matchings are selected with total weight at
  least 1 due to~\eqref{mat:matchplayed},leading to a solution with total
  cost at least 2.

  In the second case, we consider~$n = 6$.
  To prove the statement, we use the same construction as before, however,
  we do not require the even cycle anymore.
  That is, $P$ defines two 3-cycles and the argumentation remains the same
  as before.

  In the last case~$n = 8$, we consider the set of pairs
  \[
    P
    =
    \big\{\{1,2\}, \{1,3\}, \{2,3\}\big\} \cup \big\{ \{4,5\}, \{5,6\}, \{6,7\},
    \{7,8\}, \{4,8\}\big\}.
  \]
  If we interpret~$P$ as edges of an undirected graph, the corresponding
  graph has two connected components being a~3-cycle and a~5-cycle,
  respectively.
  We choose the cost-coefficients~$c \in \R^{\matches \times \rounds}$ to be
  \[
    c_{\match, r} =
    \begin{cases}
      1, & \text{if } \match \notin P \text{ and } r \in \{1,2\},\\
      0, & \text{otherwise.}
    \end{cases}
  \]
  Simple calculations show that an optimal solution of the traditional
  formulation's LP relaxation is~$x \in \R^{\matches \times \rounds}$ with
  \[
    x_{\match, r}
    =
    \begin{cases}
      \tfrac{1}{2}, & \text{if } \match \in P \text{ and } r\in \{1,2\},\\
      0, & \text{if } \match \notin E \text{ and } r\in\{1,2\},\\
      \tfrac{1}{5}, & \text{otherwise,}
    \end{cases}
  \]
  which has objective value~0, whereas the matching formulation's LP
  relaxation has value~2.
\end{proof}
The previous two lemmata completely characterize the relative strength of
the three different formulations except for~$n=4$.
The status of this missing case is settled next.
\begin{lemma}
  \label{lem:n=4}
  For $n=4$, the traditional formulation and the matching formulation are
  relaxation-equivalent.
\end{lemma}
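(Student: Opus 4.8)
The plan is to prove the equivalence by establishing both inequalities $\lprelaxation{mat}(I) \geq \lprelaxation{tra}(I)$ and $\lprelaxation{mat}(I) \leq \lprelaxation{tra}(I)$ for every instance $I$ with $n=4$. One direction is already available for free: the opening part of the proof of Lemma~\ref{lem:strength}, which builds a feasible traditional-LP solution of equal objective from any feasible matching-LP solution, makes no use of the hypothesis $n \geq 6$ and hence applies verbatim at $n=4$. This yields $\lprelaxation{mat}(I) \geq \lprelaxation{tra}(I)$, so it remains to exhibit, for $n=4$, a reverse map: from any feasible solution~$x$ of the traditional LP relaxation, a feasible solution~$y$ of the matching LP relaxation with the same objective value.

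First I would record the combinatorial structure special to $K_4$. There are exactly three perfect matchings $\matching_1,\matching_2,\matching_3$, and together they partition the six elements of $\matches$ into three disjoint pairs; in particular every match $\match$ lies in exactly one matching, which I denote $\matching(\match)$. This is precisely the feature that fails for larger~$n$. Next I would analyze one round at a time. Fixing $r \in \rounds$, Constraints~\eqref{tra:teamplays} say that $(x_{\match,r})_{\match\in\matches}$ is a nonnegative edge-weighting of $K_4$ in which every vertex has weighted degree $1$. The key step is to show that such a weighting is automatically a convex combination of the three perfect matchings: taking suitable pairwise combinations of the four degree equations forces $x_{\{1,2\},r}=x_{\{3,4\},r}$, $x_{\{1,3\},r}=x_{\{2,4\},r}$, and $x_{\{1,4\},r}=x_{\{2,3\},r}$. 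Defining $y_{\matching,r}$ to be this common value for each matching $\matching$ then produces nonnegative coefficients that sum to $1$ within each round and satisfy $y_{\matching(\match),r}=x_{\match,r}$ for every match $\match$.

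Finally I would assemble these round-wise coefficients into $y$ and verify the matching LP. Nonnegativity is immediate; Constraint~\eqref{mat:matchingonround} holds because the coefficients sum to~$1$ in each round; and Constraint~\eqref{mat:matchplayed} collapses, using that each match lies in a unique matching, to $\sum_{r\in\rounds} y_{\matching(\match),r} = \sum_{r\in\rounds} x_{\match,r} = 1$, which is Constraint~\eqref{tra:matchplayed}. The same uniqueness reduces the matching objective $\sum_{\matching\in\matchings}\sum_{r\in\rounds} d_{\matching,r} y_{\matching,r}$ to $\sum_{\match\in\matches}\sum_{r\in\rounds} c_{\match,r} x_{\match,r}$, matching the traditional objective. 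This gives $\lprelaxation{mat}(I) \leq \lprelaxation{tra}(I)$ and, combined with the reverse inequality above, completes the equivalence.

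I expect the main obstacle to be the single-round step: verifying that the degree-constrained (fractional) perfect-matching polytope of $K_4$ already coincides with the convex hull of its perfect matchings, i.e.\ that no blossom (odd-cut) inequality is active. This is exactly what breaks for even $n \geq 6$, where the odd cycles in the constructions of Lemma~\ref{lem:strength} support fractional degree-$1$ weightings that lie outside the matching polytope. The argument must therefore genuinely use that $K_4$ admits no such separating odd cut, which is what makes $n=4$ the borderline case.
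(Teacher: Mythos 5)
Your proposal is correct and takes essentially the same approach as the paper's proof: one direction reuses the solution map from the first part of Lemma~\ref{lem:strength}, and for the other you derive from the degree equations~\eqref{tra:teamplays} that the two matches of each perfect matching of $K_4$ carry equal weight in every round (the paper does exactly this by summing the equations for $i,j$ and subtracting those for $k,l$), so that setting $y_{\matching,r}$ to this common value gives a feasible matching-LP solution of equal objective. Your closing remark about the absence of active odd-cut inequalities at $n=4$ is a nice conceptual gloss on why the argument works, but it is not a different proof.
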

\begin{proof}
  Observe that exactly the same arguments as in the proof of
  Lemma~\ref{lem:strength} can be used to show that the matching
  formulation is at least as strong as the traditional formulation
  if~$n=4$.
  Hence, it remains to show that every solution~$x$ of the traditional
  formulation's LP relaxation can be turned into a solution of the LP
  relaxation of the matching formulation if~$n=4$.
  Let~$x$ be such a solution.
  Let~$\matching = \big\{\{i,j\}, \{k,l\}\big\} \in \matchings$.
  Since~$x$ satisfies Equations~\eqref{tra:teamplays}, summing the
  equations for~$i,j$ and subtracting the equations for~$k,l$
  yields~$2x_{\{i,j\},r} - 2x_{\{k,l\},r} = 0$.
  That is, the~$x$-variables for the two matches of a matching within the
  same round have the same value.
  Consequently, the solution~$y \in \R^{\matchings \times \rounds}$
  given by~$y_{\matching,r} = x_{\{i,j\},r}$ is well-defined and it is
  immediate to check that~$y$ is feasible for the matching formulation's LP
  relaxation and has the same objective value as~$x$.
\end{proof}
Summarizing the previous results of this section, we can provide a complete
comparison of the strength of the traditional, matching, and permutation
formulation.
\begin{theorem}
  \label{th:overallstrength}
  For each~$n\geq 6$, the traditional and permutation formulation are
  relaxation-equivalent, whereas the matching formulation is stronger than either of them.
  For~$n=4$, the traditional, matching, and permutation formulation for problem SRR are relaxation-equivalent.
\end{theorem}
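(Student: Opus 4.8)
The plan is to assemble the statement directly from the three preceding lemmata, since it contains no new content beyond them: the entire argument is a matter of chaining the established relations and checking that dominance is compatible with relaxation-equivalence. No new instances or LP solutions need to be constructed.

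I would first dispatch the case $n \geq 6$. The relaxation-equivalence of the traditional and permutation formulations is exactly Lemma~\ref{lem:equitraandper}, which is proved for every $n$ and hence applies here, and the matching formulation being stronger than the traditional one is precisely Lemma~\ref{lem:strength}. It then remains to conclude that the matching formulation also dominates the permutation formulation. For this I would simply unwind the definition of dominance: by Lemma~\ref{lem:equitraandper} we have $\lprelaxation{per}(I) = \lprelaxation{tra}(I)$ for every instance $I$, so the inequality $\lprelaxation{mat}(I) \geq \lprelaxation{tra}(I)$ from Lemma~\ref{lem:strength} immediately gives $\lprelaxation{mat}(I) \geq \lprelaxation{per}(I)$ for all $I$, which is condition~(i); and the strict instance furnished by Lemma~\ref{lem:strength}, for which $\lprelaxation{mat}(I) > \lprelaxation{tra}(I) = \lprelaxation{per}(I)$, supplies condition~(ii).

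For $n = 4$ I would argue by transitivity. Lemma~\ref{lem:n=4} yields that the traditional and matching formulations are relaxation-equivalent, while Lemma~\ref{lem:equitraandper}, valid for all $n$, yields that the traditional and permutation formulations are relaxation-equivalent. Because relaxation-equivalence is defined as equality of LP values on every instance, it is an equivalence relation; chaining the two equalities therefore gives $\lprelaxation{mat}(I) = \lprelaxation{tra}(I) = \lprelaxation{per}(I)$ for all $I$, so the three formulations are pairwise relaxation-equivalent.

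The only step requiring genuine attention—and thus the nearest thing to an obstacle—is the combination in the case $n \geq 6$, where one must confirm that substituting the permutation formulation for the relaxation-equivalent traditional formulation preserves both the universal inequality~(i) and the existence of a strictly separating instance~(ii). As this is immediate once the definitions are spelled out, the theorem is best viewed as a formal corollary of the three lemmata rather than an independent result.
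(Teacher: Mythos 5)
Your proposal is correct and matches the paper's approach exactly: the paper states Theorem~\ref{th:overallstrength} as a summary of Lemmata~\ref{lem:equitraandper}, \ref{lem:strength}, and~\ref{lem:n=4} with no separate proof, relying on precisely the chaining you spell out (transferring dominance through relaxation-equivalence for $n \geq 6$, and transitivity of relaxation-equivalence for $n = 4$). Your write-up simply makes explicit the routine verification the paper leaves implicit.
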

Besides verifying that all three models are equivalent for~$n=4$, we can
also show that the matching formulation's integer hull is already
completely characterized by~\eqref{mat:matchingonround},
\eqref{mat:matchplayed}, as well as non-negativity inequalities for all
variables.

\begin{proposition}
  For~$n=4$,
  Equations~\eqref{mat:matchingonround} and~\eqref{mat:matchplayed} as well
  as non-negativity inequalities define an integral polyhedron.
  That is, the matching formulation's LP relaxation coincides with its
  integer hull.
\end{proposition}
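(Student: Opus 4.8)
The plan is to exploit the highly special structure of $K_4$. For $n=4$ there are exactly three perfect matchings, say $\matchings = \{M_1, M_2, M_3\}$, and exactly three rounds, so the variables $(y_{\matching,r})_{\matching\in\matchings,\, r\in\rounds}$ form a $3\times 3$ array. The crucial observation is that in $K_4$ every edge lies in exactly one perfect matching: for each match $\match\in\matches$ there is a unique matching $M(\match)\in\matchings$ with $\match\in M(\match)$. I would begin by recording this fact, as it is what makes the inner sum in~\eqref{mat:matchplayed} collapse.

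With this in hand, I would show that the constraints~\eqref{mat:matchplayed} simplify dramatically. Because $M(\match)$ is the only matching containing $\match$, constraint~\eqref{mat:matchplayed} for $\match$ reduces to $\sum_{r\in\rounds} y_{M(\match),r} = 1$. Moreover, the two edges of any fixed matching $\matching$ both yield the identical constraint $\sum_{r\in\rounds} y_{\matching,r} = 1$. Hence~\eqref{mat:matchplayed} is equivalent to the three ``row-sum'' constraints $\sum_{r\in\rounds} y_{\matching,r} = 1$, one for each $\matching\in\matchings$. Combining these with the ``column-sum'' constraints~\eqref{mat:matchingonround}, namely $\sum_{\matching\in\matchings} y_{\matching,r} = 1$ for each $r\in\rounds$, together with the non-negativity inequalities $y_{\matching,r}\geq 0$, I would conclude that the feasible region is exactly the set of $3\times 3$ doubly stochastic matrices indexed by matchings and rounds, i.e.\ the Birkhoff polytope.

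Finally, I would invoke the Birkhoff--von Neumann theorem (see~\cite{Ziegler1995}): the vertices of the Birkhoff polytope are precisely the permutation matrices, which are integral. Therefore the polyhedron defined by~\eqref{mat:matchingonround}, \eqref{mat:matchplayed}, and non-negativity is integral, so its LP relaxation coincides with its integer hull, as claimed. There is essentially no obstacle beyond the initial recognition that the edge-in-a-unique-matching property of $K_4$ turns~\eqref{mat:matchplayed} into row-sum constraints; the only care needed is to verify that the two edges of each matching produce the same constraint and that the counting matches up, since $\card{\matchings}=\card{\rounds}=3$. It is worth emphasizing that this collapse is genuinely specific to $n=4$: for larger $n$ an edge belongs to many perfect matchings, the Birkhoff structure is lost, and indeed Lemma~\ref{lem:strength} shows the matching formulation then strictly dominates the traditional one.
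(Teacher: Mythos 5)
Your proof is correct, and its core reduction is exactly the one the paper uses: the observation that in $K_4$ every edge lies in a \emph{unique} perfect matching, so that each constraint~\eqref{mat:matchplayed} collapses to a row sum $\sum_{r\in\rounds} y_{\matching,r}=1$, with the two edges of each matching producing duplicate (hence redundant) constraints. The only divergence is in how integrality is then concluded. The paper writes out the resulting $9\times 9$ system, discards the three redundant rows, recognizes the remaining constraint matrix as the node--edge incidence matrix of a bipartite graph (matchings on one side, rounds on the other, the variables $y_{\matching,r}$ as edges), and invokes total unimodularity together with the Hoffman--Kruskal theorem~\cite{Schrijver1987}. You instead identify the feasible region as the $3\times 3$ Birkhoff polytope and cite Birkhoff--von Neumann~\cite{Ziegler1995}. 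These endings are essentially interchangeable: the standard proof of Birkhoff--von Neumann is precisely the total unimodularity argument the paper spells out, so neither route is more elementary than the other. Your phrasing is slightly more compact for a reader willing to take Birkhoff--von Neumann as a black box (and it dovetails nicely with the paper's use of the Birkhoff polytope in the proof of Lemma~\ref{lem:equitraandper}), while the paper's explicit TU argument makes the mechanism visible and is the form that one would try to extend to other right-hand sides or variants of the system.
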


\begin{proof}
  To prove the proposition's statement, we show that the constraint matrix
  of~\eqref{mat} is totally unimodular.
  The result follows then by the Hoffman-Kruskal
  theorem~\cite{Schrijver1987} as all right-hand
  side values in~\eqref{mat} are integral.

  For $n=4$, the set of all matchings $\matchings$ consists of exactly the
  three matchings
  \begin{align*}
    \matching_1
    &= \big\{ \{1, 2\}, \{3, 4\} \big\},
    &
      \matching_2
    &= \big\{ \{1, 3\}, \{2, 4\} \big\},
    &
      \matching_3 &= \big\{ \{1, 4\}, \{2, 3\} \big\}.
  \end{align*}
  The non-trivial constraints from Formulation~\eqref{mat} are
  Equations~\eqref{mat:matchingonround} and~\eqref{mat:matchplayed}, which
  yield system
  \[
    \begin{pmatrix}
      1& & &1& & &1& &  \\
      &1& & &1& & &1&  \\
      & &1& & &1& & &1 \\
      1&1&1& & & & & &  \\
      & & &1&1&1& & &  \\
      & & & & & &1&1&1 \\
      & & & & & &1&1&1 \\
      & & &1&1&1& & &  \\
      1&1&1& & & & & &  \\
    \end{pmatrix}
    \begin{pmatrix}
      y_{\matching_1, 1} \\
      y_{\matching_1, 2} \\
      y_{\matching_1, 3} \\
      y_{\matching_2, 1} \\
      y_{\matching_2, 2} \\
      y_{\matching_2, 3} \\
      y_{\matching_3, 1} \\
      y_{\matching_3, 2} \\
      y_{\matching_3, 3} \\
    \end{pmatrix}
    =
    \begin{pmatrix}
      1\\1\\1\\1\\1\\1\\1\\1\\1
    \end{pmatrix}
    .
    \quad
    \begin{array}{l}
      (\eqref{mat:matchingonround}, r=1) \\
      (\eqref{mat:matchingonround}, r=2) \\
      (\eqref{mat:matchingonround}, r=3) \\
      (\eqref{mat:matchplayed}, m=\{1, 2\}) \\
      (\eqref{mat:matchplayed}, m=\{1, 3\}) \\
      (\eqref{mat:matchplayed}, m=\{1, 4\}) \\
      (\eqref{mat:matchplayed}, m=\{2, 3\}) \\
      (\eqref{mat:matchplayed}, m=\{2, 4\}) \\
      (\eqref{mat:matchplayed}, m=\{3, 4\}) \\
    \end{array}
  \]
  Note that the last three equations are redundant and can be removed.
  The constraint matrix of the remaining equations is the node-edge
  incidence matrix of a bipartite graph and hence totally unimodular, which
  concludes the proof.
\end{proof}

Thus, for $n=4$, simply solving the LP-relaxation of the matching formulation by the simplex method, suffices to find an optimum integral solution.

\section{Strengthening the formulations}
\label{sec:inequalities}

In this section, we continue our investigations of the structure of the formulations. In Section~\ref{sec:matchingcontinued}, we derive an exponentially sized class of valid inequalities for the matching formulation. Also, we show in Section~\ref{sec:tracontinued} that adding the so-called odd-cut inequalities to the traditional formulation yields a formulation that is relaxation-equivalent to the matching formulation.

\subsection{Strengthening the matching formulation}
\label{sec:matchingcontinued}

Observe that Theorem~\ref{th:overallstrength} does not rule out the
possibility that, for $n\geq 6$, every vertex of the matching formulation
is integral.
That, however, is not the case already for~$n = 6$ as we will show next.
To this end, we first provide a fractional point~$y^\star$ that is contained in the LP
relaxation of the matching formulation for~$n=6$.
Afterwards, we derive a class of valid inequalities for the integer hull matching
formulation, and finally, we provide one such inequality that
is violated by~$y^\star$.

\begin{example}
  \label{ex:examplesolution}
  Let $n=6$.
  Then, the set of teams and rounds is given by~$\teams = \{1, \dots, 6\}$
  and rounds~$\rounds = \{1, \dots, 5\}$, respectively.
  In Figure~\ref{fig:examplesolution}, we depict a fractional solution of
  the matching formulation's LP relaxation.
  For each round~$r \in \rounds$, we provide two perfect matchings between the
  teams~$T$, the blue and green (dashed) matching~$\matching$, whose
  corresponding variables~$y_{\matching,r}$ have value~$\frac{1}{2}$ in the
  corresponding solution; all remaining variables have value~0.
  It is easy to verify that this fractional solution is indeed feasible for
  the LP relaxation of~\eqref{mat}.

  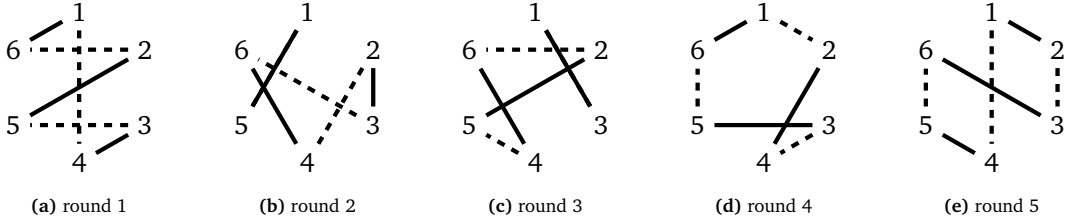
\begin{figure}[ht]
    \begin{subfigure}{\textwidth/5}
      \centering
      \begin{tikzpicture}
        \node (0) at ({sin(0*360/6)}, {cos(0*360/6)}) {1};
        \node (1) at ({sin(1*360/6)}, {cos(1*360/6)}) {2};
        \node (2) at ({sin(2*360/6)}, {cos(2*360/6)}) {3};
        \node (3) at ({sin(3*360/6)}, {cos(3*360/6)}) {4};
        \node (4) at ({sin(4*360/6)}, {cos(4*360/6)}) {5};
        \node (5) at ({sin(5*360/6)}, {cos(5*360/6)}) {6};

        \draw[draw=darkblue, ultra thick] (2) edge (3);
        \draw[draw=darkblue, ultra thick] (1) edge (4);
        \draw[draw=darkblue, ultra thick] (0) edge (5);

        \draw[draw=darkgreen, dashed, ultra thick] (2) edge (4);
        \draw[draw=darkgreen, dashed, ultra thick] (0) edge (3);
        \draw[draw=darkgreen, dashed, ultra thick] (1) edge (5);
      \end{tikzpicture}
      \caption{round 1}
    \end{subfigure}%
    \begin{subfigure}{\textwidth/5}
      \centering
      \begin{tikzpicture}
        \node (0) at ({sin(0*360/6)}, {cos(0*360/6)}) {1};
        \node (1) at ({sin(1*360/6)}, {cos(1*360/6)}) {2};
        \node (2) at ({sin(2*360/6)}, {cos(2*360/6)}) {3};
        \node (3) at ({sin(3*360/6)}, {cos(3*360/6)}) {4};
        \node (4) at ({sin(4*360/6)}, {cos(4*360/6)}) {5};
        \node (5) at ({sin(5*360/6)}, {cos(5*360/6)}) {6};

        \draw[draw=darkblue, ultra thick] (1) edge (2);
        \draw[draw=darkblue, ultra thick] (0) edge (4);
        \draw[draw=darkblue, ultra thick] (3) edge (5);

        \draw[draw=darkgreen, dashed, ultra thick] (2) edge (5);
        \draw[draw=darkgreen, dashed, ultra thick] (1) edge (3);
        \draw[draw=darkgreen, dashed, ultra thick] (0) edge (4);
      \end{tikzpicture}
      \caption{round 2}
    \end{subfigure}%
    \begin{subfigure}{\textwidth/5}
      \centering
      \begin{tikzpicture}
        \node (0) at ({sin(0*360/6)}, {cos(0*360/6)}) {1};
        \node (1) at ({sin(1*360/6)}, {cos(1*360/6)}) {2};
        \node (2) at ({sin(2*360/6)}, {cos(2*360/6)}) {3};
        \node (3) at ({sin(3*360/6)}, {cos(3*360/6)}) {4};
        \node (4) at ({sin(4*360/6)}, {cos(4*360/6)}) {5};
        \node (5) at ({sin(5*360/6)}, {cos(5*360/6)}) {6};

        \draw[draw=darkblue, ultra thick] (0) edge (2);
        \draw[draw=darkblue, ultra thick] (1) edge (4);
        \draw[draw=darkblue, ultra thick] (3) edge (5);

        \draw[draw=darkgreen, dashed, ultra thick] (0) edge (2);
        \draw[draw=darkgreen, dashed, ultra thick] (3) edge (4);
        \draw[draw=darkgreen, dashed, ultra thick] (1) edge (5);
      \end{tikzpicture}
      \caption{round 3}
    \end{subfigure}%
    \begin{subfigure}{\textwidth/5}
      \centering
      \begin{tikzpicture}
        \node (0) at ({sin(0*360/6)}, {cos(0*360/6)}) {1};
        \node (1) at ({sin(1*360/6)}, {cos(1*360/6)}) {2};
        \node (2) at ({sin(2*360/6)}, {cos(2*360/6)}) {3};
        \node (3) at ({sin(3*360/6)}, {cos(3*360/6)}) {4};
        \node (4) at ({sin(4*360/6)}, {cos(4*360/6)}) {5};
        \node (5) at ({sin(5*360/6)}, {cos(5*360/6)}) {6};

        \draw[draw=darkblue, ultra thick] (2) edge (4);
        \draw[draw=darkblue, ultra thick] (1) edge (3);
        \draw[draw=darkblue, ultra thick] (0) edge (5);

        \draw[draw=darkgreen, dashed, ultra thick] (0) edge (1);
        \draw[draw=darkgreen, dashed, ultra thick] (4) edge (5);
        \draw[draw=darkgreen, dashed, ultra thick] (2) edge (3);
      \end{tikzpicture}
      \caption{round 4}
    \end{subfigure}%
    \begin{subfigure}{\textwidth/5}
      \centering
      \begin{tikzpicture}
        \node (0) at ({sin(0*360/6)}, {cos(0*360/6)}) {1};
        \node (1) at ({sin(1*360/6)}, {cos(1*360/6)}) {2};
        \node (2) at ({sin(2*360/6)}, {cos(2*360/6)}) {3};
        \node (3) at ({sin(3*360/6)}, {cos(3*360/6)}) {4};
        \node (4) at ({sin(4*360/6)}, {cos(4*360/6)}) {5};
        \node (5) at ({sin(5*360/6)}, {cos(5*360/6)}) {6};

        \draw[draw=darkblue, ultra thick] (0) edge (1);
        \draw[draw=darkblue, ultra thick] (2) edge (5);
        \draw[draw=darkblue, ultra thick] (3) edge (4);

        \draw[draw=darkgreen, dashed, ultra thick] (4) edge (5);
        \draw[draw=darkgreen, dashed, ultra thick] (1) edge (2);
        \draw[draw=darkgreen, dashed, ultra thick] (0) edge (3);
      \end{tikzpicture}
      \caption{round 5}
    \end{subfigure}%
    \caption{A feasible point for the LP relaxation of Formulation~\eqref{mat}.}
    \label{fig:examplesolution}
  \end{figure}
\end{example}

To describe our class of valid inequalities, consider the following lemma.

\begin{lemma}
  \label{lem:simpleCGcut}
  Let~$\match_1, \match_2 \in \matches$ be disjoint and let~$r' \in
  \rounds$.
  Then,
  \begin{equation}
    \label{eq:cgcutmat}
    \sum_{r \in \rounds\setminus\{r'\}}\sum_{\substack{\matching\in\matchings\colon\\
    \match_1 \in \matching \text{ or } \match_2\in\matching}}
    y_{\matching,r}
    +
    \sum_{\substack{\matching\in\matchings\colon\\
    \match_1 \notin \matching \text{ or } \match_2\notin\matching}}
    y_{\matching,r'}
    +
    \sum_{\substack{\matching\in\matchings\colon\\
    \match_1, \match_2 \in \matching}}
    2y_{\matching,r'}
    \geq 2
  \end{equation}
  is a valid inequality for~\eqref{mat}.
  In particular, it is a Chv\'atal-Gomory cut derived from the LP
  relaxation of~\eqref{mat}.
\end{lemma}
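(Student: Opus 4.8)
The plan is to establish validity and the Chv\'atal--Gomory property in one stroke, by exhibiting explicit nonnegative multipliers for the equality constraints of the LP relaxation whose aggregation, after the standard rounding step, reproduces exactly inequality~\eqref{eq:cgcutmat}. Concretely, I would assign multiplier~$\tfrac12$ to the round constraint~\eqref{mat:matchingonround} for the single round~$r'$, multiplier~$\tfrac12$ to each of the two match constraints~\eqref{mat:matchplayed} for~$\match_1$ and for~$\match_2$, and multiplier~$0$ to every other constraint. Here disjointness of~$\match_1$ and~$\match_2$ is what makes the whole construction meaningful, since it guarantees that a perfect matching may contain both~$\match_1$ and~$\match_2$, so that the third sum is genuinely present.

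Summing the chosen constraints with these multipliers produces a single equality. Reading off the coefficient of a variable~$y_{\matching,r}$ in this aggregate, I would record that it equals
\[
  \tfrac12\bigl(\mathds{1}[r=r'] + \mathds{1}[\match_1\in\matching] + \mathds{1}[\match_2\in\matching]\bigr),
\]
while the right-hand side equals~$\tfrac12+\tfrac12+\tfrac12=\tfrac32$. Since every variable is nonnegative, replacing each coefficient by its round-up can only increase the left-hand side on the feasible region, and on integer points the resulting left-hand side is integral; hence the right-hand side~$\tfrac32$ may be rounded up to~$\lceil\tfrac32\rceil = 2$. This is precisely the Chv\'atal--Gomory derivation, so whichever inequality the rounding yields is valid for~\eqref{mat} and is a Chv\'atal--Gomory cut.

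It then remains to check that the rounded inequality is literally~\eqref{eq:cgcutmat}, and this bookkeeping is the only delicate step. I would split the matchings according to how many of~$\match_1,\match_2$ they contain and verify the six coefficient values. For~$r\neq r'$ the coefficient $\tfrac12(\mathds{1}[\match_1\in\matching]+\mathds{1}[\match_2\in\matching])$ rounds up to~$1$ exactly when~$\match_1\in\matching$ or~$\match_2\in\matching$ (covering both the ``exactly one'' and the ``both'' cases) and to~$0$ otherwise, which matches the first sum. For~$r=r'$ the coefficient is~$\tfrac12\to 1$ when~$\matching$ contains neither, $1\to 1$ when it contains exactly one, and~$\tfrac32\to 2$ when it contains both; the last value supplies the factor~$2$ of the third sum, while the ``neither'' and ``exactly one'' cases together account for the second sum, whose index set~$\{\matching : \match_1\notin\matching \text{ or } \match_2\notin\matching\}$ is exactly the complement of~$\{\matching : \match_1,\match_2\in\matching\}$.

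The main obstacle is purely this matching of the rounded coefficients against the three explicitly written sums; in particular one must be careful that, in round~$r'$, the matchings containing neither~$\match_1$ nor~$\match_2$ still receive coefficient~$1$ (because~$\tfrac12$ rounds up to~$1$), which is precisely why the second sum ranges over all matchings failing to contain both~$\match_1$ and~$\match_2$ rather than only those meeting exactly one of them. As an optional sanity check I would also verify validity directly on integer solutions by a short case distinction on the round(s) in which~$\match_1$ and~$\match_2$ are played relative to~$r'$, confirming that the left-hand side is always at least~$2$, and equals~$2$ in every case except when~$\match_1$ and~$\match_2$ are played in two distinct rounds both different from~$r'$, where it equals~$3$.
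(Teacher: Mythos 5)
Your proposal is correct and takes essentially the same route as the paper's own proof: both aggregate Constraint~\eqref{mat:matchingonround} for round~$r'$ and Constraints~\eqref{mat:matchplayed} for~$\match_1$ and~$\match_2$ with multipliers~$\tfrac12$, obtain an aggregate equation with right-hand side~$\tfrac32$, round up the left-hand-side coefficients using nonnegativity of the variables, and then round the right-hand side up to~$2$ using integrality of feasible solutions. Your explicit case analysis matching the rounded coefficients to the three sums (and the optional verification on integer solutions) is simply a more detailed write-up of the identical Chv\'atal--Gomory derivation.
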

\begin{proof}
  It is sufficient to prove that~\eqref{eq:cgcutmat} is indeed a
  Chv\'atal-Gomory cut.
  To this end, we multiply Equation~\eqref{mat:matchingonround} for round
  $r'$ and Equations~\eqref{mat:matchplayed} for matches~$\match_1$
  and~$\match_2$ by~$\frac{1}{2}$ and sum the resulting equations to obtain
  \[
    \sum_{r \in \rounds\setminus\{r'\}}\sum_{\substack{\matching\in\matchings\colon\\
    \match_1 \in \matching \text{ or } \match_2\in\matching}}
    \frac{C_{\matching,r}}{2} y_{\matching,r}
    +
    \sum_{\substack{\matching\in\matchings\colon\\
    \match_1 \notin \matching \text{ or } \match_2\notin\matching}}
    \frac{1 + C_{\matching,r}}{2}y_{\matching,r'}
    +
    \sum_{\substack{\matching\in\matchings\colon\\
    \match_1, \match_2 \in \matching}}
    \tfrac{3}{2}y_{\matching,r'}
    = \frac{3}{2},
  \]
  where~$C_{\matching,r} = \card{\matching \cap \{\match_1,\match_2\}}$.
  Since all~$y$-variables are non-negative, we can turn this equation into
  a~$\geq$-inequality by rounding up the left-hand side coefficients.
  Moreover, since in a feasible solution for~\eqref{mat} all variables
  attain integer values, we can increase the right-hand side
  from~$\frac{3}{2}$ to~2, which yields the desired inequality.
\end{proof}
Using this class of inequalities, we can show that the point~$y^\star$
presented in the previous example is indeed not contained in the matching
formulation's integer hull.
Select~$r' = 1$, $\match_1 = \{1,6\}$, and~$\match_2 = \{3,5\}$, and
let~$\matching$ be the blue and~$\matching'$ be the green matching of the
first round as well as~$\matching''$ the blue matching of round~$4$.
Then, the corresponding inequality's left-hand side evaluates in~$y^\star$ to
$y^\star_{M'',4} + y^\star_{M,1} + y^\star_{M',1} = \frac{3}{2}$.
Hence, $y^\star$ violates the corresponding inequality as~$\frac{3}{2} \ngeq 2$.

Note that Inequality~\eqref{eq:cgcutmat} is a
so-called~$\{0, \frac{1}{2}\}$-cut~\cite{CapraraFischetti1996} as all multipliers used in the
derivation are~$\frac{1}{2}$ (and~0 for inequalities/equations that have
not been used).
By taking more equations in the generation of a valid inequality into
account, we can generalize~\eqref{eq:cgcutmat} to an exponentially large
class of inequalities.
\begin{proposition}
  Let~$A \subseteq \matches$ be a set of pairwise disjoint matches and
  let~$B \subseteq \rounds$.
  If~$\card{A} + \card{B}$ is odd, then
  \begin{equation}
    \label{eq:generalcg}
    \sum_{\matching\in\matchings} \sum_{r \in B}
    \left\lceil \frac{1 + \card{\matching \cap A}}{2}\right\rceil y_{\matching,r}
    +
    \sum_{\matching\in\matchings} \sum_{r \in \rounds\setminus B}
    \left\lceil \frac{\card{\matching \cap A}}{2}\right\rceil y_{\matching,r}
    \geq
    \frac{1 + \card{A} + \card{B}}{2},
  \end{equation}
  is a valid inequality for~\eqref{mat}.
  In particular, it is a Chv\'atal-Gomory cut derived from the LP
  relaxation of~\eqref{mat}.
\end{proposition}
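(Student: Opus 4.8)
The plan is to mimic the Chv\'atal-Gomory derivation of Lemma~\ref{lem:simpleCGcut}, only now aggregating one equation per element of $B$ and one per element of $A$. Concretely, I would take Equation~\eqref{mat:matchingonround} for every round $r \in B$ together with Equation~\eqref{mat:matchplayed} for every match $\match \in A$, scale each of these $\card{A} + \card{B}$ equalities by $\frac{1}{2}$, and add them; the right-hand side of the aggregated equality is then $\frac{\card{A} + \card{B}}{2}$.

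The next step is to read off the coefficient of a fixed variable $y_{\matching,r}$ in this sum. The variable appears in the scaled Equation~\eqref{mat:matchingonround} for round $r$ precisely when $r \in B$, contributing $\frac{1}{2}$, and it appears in the scaled Equation~\eqref{mat:matchplayed} for a match $\match \in A$ exactly once for every $\match \in A$ with $\match \in \matching$. Here I would invoke the hypothesis that the matches in $A$ are pairwise disjoint: this guarantees that the number of such matches equals $\card{\matching \cap A}$, so the contribution from the second group of equations is $\frac{\card{\matching \cap A}}{2}$, independently of $r$. Hence the coefficient of $y_{\matching,r}$ equals $\frac{1 + \card{\matching \cap A}}{2}$ if $r \in B$ and $\frac{\card{\matching \cap A}}{2}$ if $r \in \rounds \setminus B$, matching the fractional coefficients in~\eqref{eq:generalcg} before rounding.

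From here the argument follows the Chv\'atal-Gomory recipe used in the lemma. Since all $y$-variables are non-negative, rounding every left-hand-side coefficient up to the nearest integer only increases the left-hand side, which turns the equality into a valid $\geq$-inequality whose coefficients are the ceilings appearing in~\eqref{eq:generalcg} and whose right-hand side is still $\frac{\card{A}+\card{B}}{2}$. Finally, I would use integrality: in any feasible integer solution the left-hand side now has integer coefficients and integer variable values, hence is itself an integer, so it is in fact bounded below by $\bigl\lceil \frac{\card{A}+\card{B}}{2} \bigr\rceil$. This is exactly where the parity hypothesis bites: because $\card{A} + \card{B}$ is odd, $\frac{\card{A}+\card{B}}{2}$ is a half-integer and its ceiling equals $\frac{1 + \card{A} + \card{B}}{2}$, the claimed right-hand side of~\eqref{eq:generalcg}.

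I do not expect any serious obstacle, as the derivation is routine once the coefficients are bookkept correctly; the two hypotheses are precisely what makes it go through. The pairwise disjointness of $A$ is what yields the clean count $\card{\matching \cap A}$ -- the only delicate point, namely matchings meeting $A$ in two or more matches, is handled by disjointness ruling out double-counting -- and the odd parity of $\card{A}+\card{B}$ is what lets the final integer rounding lift the right-hand side from $\frac{\card{A}+\card{B}}{2}$ to $\frac{1+\card{A}+\card{B}}{2}$. Specializing to $A = \{\match_1,\match_2\}$ and $B = \{r'\}$ recovers Lemma~\ref{lem:simpleCGcut}, a useful sanity check.
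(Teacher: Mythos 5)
Your proof is correct and essentially identical to the paper's: both aggregate Equations~\eqref{mat:matchingonround} for $r \in B$ and Equations~\eqref{mat:matchplayed} for $\match \in A$ with multipliers $\frac{1}{2}$, round the left-hand-side coefficients up using non-negativity of the $y$-variables, and then lift the right-hand side to $\frac{1+\card{A}+\card{B}}{2}$ using integrality together with the odd parity of $\card{A}+\card{B}$. One harmless misattribution: the number of matches of $A$ lying in a matching $\matching$ equals $\card{\matching \cap A}$ by definition of set intersection, so pairwise disjointness is not what the coefficient bookkeeping relies on (neither your derivation nor the paper's actually needs it for validity; it only keeps the resulting cut from being trivial).
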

\begin{proof}
  We follow the line of the proof of Lemma~\ref{lem:simpleCGcut} and
  multiply each constraint of type~\eqref{mat:matchingonround} with index
  in~$A$ and each constraint of type~\eqref{mat:matchplayed} with index
  in~$B$ by~$\frac{1}{2}$ and sum all resulting equations.
  This leads to
  \[
    \sum_{j = 1}^{\nicefrac{n}{2}}
    \sum_{\substack{\matching\in\matchings\colon\\ \card{\matching\cap A} = j}}
    \sum_{r \in B}
    \frac{1 + j}{2} y_{\matching,r}
    +
    \sum_{j = 1}^{\nicefrac{n}{2}}
    \sum_{\substack{\matching\in\matchings\colon\\ \card{\matching\cap A} = j}}
    \sum_{r \in \rounds\setminus B}
    \frac{j}{2} y_{\matching,r}
    =
    \frac{\card{A} + \card{B}}{2}.
  \]
  Since all~$y$-variables are non-negative, we derive the inequality
  \[
    \sum_{\matching\in\matchings} \sum_{r \in B}
    \left\lceil \frac{1 + \card{\matching \cap A}}{2}\right\rceil y_{\matching,r}
    +
    \sum_{\matching\in\matchings} \sum_{r \in \rounds\setminus B}
    \left\lceil \frac{\card{\matching \cap A}}{2}\right\rceil y_{\matching,r}
    \geq
    \frac{\card{A} + \card{B}}{2},
  \]
  and by integrality of the~$y$-variables, we can round up the right-hand
  side, which leads to the desired inequality.
\end{proof}
While Inequalities~\eqref{eq:cgcutmat} can trivially be separated in
polynomial time, an efficient separation algorithm for~\eqref{eq:generalcg}
is not immediate.
We leave the complexity status of separating~\eqref{eq:generalcg} open for future research.

\subsection{Strengthening the traditional formulation}
\label{sec:tracontinued}

Revisiting the proof of Lemma~\ref{lem:strength}, it becomes clear that it is possible to assign, for a fixed
round, each edge (match) of an odd cycle in~$K_n$ a weight of~$\frac{1}{2}$.
That is, the traditional formulation can assign an odd cycle of length~$k$
a weight of~$\frac{k}{2}$.
Such a solution, however, cannot be written as a convex combination of
integer feasible solutions, because each such solution defines a perfect
matching on the matches of a fixed round, i.e., the total weight of an odd
cycle can be at most~$\frac{k-1}{2}$.
To strengthen the traditional formulation, one can thus add facet defining
inequalities for the perfect matching polytope~$P_M$ to Model~\eqref{tra}, which
results in the additional inequalities
\begin{align}
  \label{eq:blossom}
  \sum_{i \in U} \sum_{j \in T \setminus U} x_{\{i,j\},r} &\geq 1, && U \subseteq T \text{ with } \card{U}\text{ odd},
                                                                      r \in \rounds,
\end{align}
which correspond to the odd-cut inequalities for the matching polytope and can be
separated in polynomial time.
\begin{lemma}
  Let~$n \geq 6$.
  The traditional formulation~\eqref{tra} extended by~\eqref{eq:blossom} is
  relaxation-equivalent to the matching formulation.
\end{lemma}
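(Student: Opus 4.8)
The plan is to prove relaxation-equivalence directly, by exhibiting a value-preserving correspondence between feasible LP solutions of the two formulations; the map in one direction is exactly the one already used in Lemma~\ref{lem:strength}, and the reverse map is supplied by Edmonds' perfect matching polytope theorem. The key structural observation I would record first is that, for a fixed round $r$, Constraints~\eqref{tra:teamplays} are precisely the degree equations and Inequalities~\eqref{eq:blossom} are precisely the odd-cut inequalities describing the perfect matching polytope~$P_M$ of $K_n = (\teams,\matches)$. By Edmonds' theorem, together with non-negativity these describe $P_M$ exactly. Consequently, a point $x$ is feasible for the traditional formulation extended by~\eqref{eq:blossom} if and only if, for every round $r\in\rounds$, the slice $(x_{\match,r})_{\match\in\matches}$ lies in $P_M$, and in addition the match-played Constraints~\eqref{tra:matchplayed} hold.

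For the inclusion that the extended traditional formulation is no stronger than the matching formulation, I would take any feasible $y$ of the matching LP and reuse the map $x_{\match,r}=\sum_{\matching\in\matchings\colon\match\in\matching}y_{\matching,r}$ from Lemma~\ref{lem:strength}. That lemma already gives feasibility of~\eqref{tra:teamplays} and~\eqref{tra:matchplayed} and equality of objective values; what remains is to note that, because $\sum_{\matching\in\matchings}y_{\matching,r}=1$ by~\eqref{mat:matchingonround} and $y\ge 0$, each slice $(x_{\match,r})_{\match\in\matches}$ is a convex combination of perfect-matching incidence vectors, hence lies in $P_M$ and satisfies~\eqref{eq:blossom}. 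Thus $x$ is feasible for the extended formulation with the same cost, so its LP value is at most $v^{LP}_{mat}(I)$.

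For the reverse inclusion I would start from a feasible $x$ of the extended formulation. By the characterization above each slice lies in $P_M$, so Edmonds' theorem yields a representation $(x_{\match,r})_{\match\in\matches}=\sum_{\matching\in\matchings}\lambda^r_{\matching}\,\chi^{\matching}$ with $\lambda^r_{\matching}\ge 0$ and $\sum_{\matching\in\matchings}\lambda^r_{\matching}=1$, where $\chi^{\matching}$ denotes the incidence vector of $\matching$. Setting $y_{\matching,r}=\lambda^r_{\matching}$ gives a non-negative point satisfying~\eqref{mat:matchingonround} immediately, while~\eqref{mat:matchplayed} follows by summing the representations over all rounds and invoking~\eqref{tra:matchplayed}; the objective is preserved by the same computation as in Lemma~\ref{lem:strength}, since $d_{\matching,r}=\sum_{\match\in\matching}c_{\match,r}$. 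Hence $v^{LP}_{mat}(I)$ is at most the LP value of the extended formulation, and combining the two inclusions proves equality for every instance~$I$.

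The part I expect to carry the real content is this reverse direction, and it hinges entirely on the exactness of Edmonds' description of~$P_M$: one must know that the degree equations and odd-cut inequalities admit no fractional vertices beyond convex combinations of matchings, which is exactly what allows each round-slice to be decomposed. Once that is invoked, the remaining work is routine bookkeeping identical in spirit to Lemma~\ref{lem:strength}; I would only remark that the decomposition need not be unique, but any choice suffices, since both feasibility and cost depend on~$y$ only through the aggregates $\sum_{\matching\in\matchings\colon\match\in\matching}y_{\matching,r}$.
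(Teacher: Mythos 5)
Your proposal is correct and takes essentially the same route as the paper: both directions use the aggregation map $x_{\match,r}=\sum_{\matching\in\matchings\colon \match\in\matching}y_{\matching,r}$ from Lemma~\ref{lem:strength}, and the reverse direction rests, exactly as in the paper, on Edmonds' exact description of the perfect matching polytope to decompose each round-slice into a convex combination of perfect matchings and read off the $y$-variables. The only cosmetic difference is in the forward direction, where the paper checks~\eqref{eq:blossom} by a direct parity argument (every perfect matching has an edge crossing every odd cut), while you deduce it from membership of each slice in~$P_M$; this is the same fact in different packaging.
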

\begin{proof}
  We use the same proof strategy as for Lemma~\ref{lem:strength}.
  Therefore, consider again the solution~$x \in \R^{\matches \times
    \rounds}$ given by~$x_{\match,r} = \sum_{\matching\in\matchings\colon
    \match\in\matching} y_{\matching,r}$ for a solution~$y$ of the matching
  formulation's LP relaxation.
  Due to the proof of Lemma~\ref{lem:strength}, it is sufficient to show
  that~$x$ satisfies~\eqref{eq:blossom} to prove that the matching
  formulation is at least as strong as the enhanced traditional
  formulation.
  Let~$U \subseteq \teams$ have odd cardinality.
  Since every $\matching \in \matchings$ is a perfect matching,
  there is at least one team~$i \in U$ that does not play against another
  team in $U$ since~$U$ is odd.
  Hence, for each~$\matching \in \matchings$, there is a match~$\{i,j\} \in
  \matching$ with~$i \in U$ and~$j \notin U$.
  Then,
  \begin{align*}
    \sum_{i \in U} \sum_{j \in \teams \setminus U} x_{\{i,j\},r}
    &=
    \sum_{i \in U} \sum_{j \in \teams \setminus U}
    \sum_{\substack{\matching \in \matchings\colon\\ \{i,j\} \in \matching}} y_{\matching,r}\\
    &=
      \sum_{\matching\in\matchings} \sum_{i \in U} \sum_{\substack{j \in T
      \setminus U\colon\\ \{i,j\}\in \matching}} y_{\matching,r}
    \geq
    \sum_{\matching\in\matchings} y_{\matching,r}
    \overset{~\eqref{mat:matchplayed}}{\geq}
    1.
  \end{align*}
  Consequently, the matching formulation is at least as strong as the
  enhanced traditional formulation.

  To prove that the enhanced traditional formulation is not weaker than the matching
  formulation, we use a strategy similar to the one pursued in the proof of
  Lemma~\ref{lem:equitraandper}.
  Since the enhanced traditional formulation contains, per round~$r$, all facet
  defining inequalities as well as equations for the perfect matching
  polytope~$P_M$, each vector~$X^r \in \R^{\matches}$ given
  by~$X^r_{\{i,j\}} = x_{\{i,j\},r}$ is contained in~$P_M$.
  Hence, there exist non-negative multipliers~$\lambda^r \in \R_+^{\matchings}$
  with~$\sum_{\matching \in \matchings} \lambda^r_{\matching} = 1$ such
  that~$X^r = \sum_{\matching\in\matchings} \lambda^r_\matching
  V_\matching$, where~$V_\matching$ is the vertex of~$P_M$ corresponding to
  the perfect matching~$\matching$.
  We claim that~$y \in \R^{\matchings \times \rounds}$ given
  by~$y_{\matching,r} = \lambda^r_\matching$ is feasible for the LP
  relaxation of the matching formulation.
  Because~$X^r = \sum_{\matching \in \matchings} \lambda^r_\matching
  V_\matching$ implies~$X^r_{\match} = \sum_{\matching\in\matchings\colon
    \match\in\matching} \lambda^r_\matching$,
  both~\eqref{mat:matchingonround} and~\eqref{mat:matchplayed} are
  satisfied as
  \begin{align*}
    \sum_{\matching \in \matchings} y_{\matching,r}
    &=
      \sum_{\matching \in \matchings} \lambda^r_\matching
      =
      1,\\
    \sum_{\substack{\matching\in\matchings\colon\\ \match \in \matching}}
    \sum_{r \in \rounds} y_{\matching,r}
    &=
    \sum_{\substack{\matching\in\matchings\colon\\ \match \in \matching}}
    \sum_{r \in \rounds} \lambda^r_\matching
    =
    \sum_{r \in \rounds} x_{\match,r}
    \overset{~\eqref{tra:matchplayed}}{=}
    1.
  \end{align*}
  Moreover, $y$ is non-negative as the~$\lambda$'s form a convex
  combination and both~$x$ and~$y$ have the same objective value since
  \[
    \sum_{\matching\in\matchings}\sum_{r\in\rounds} d_{\matching,r}y_{\matching,r}
    =
    \sum_{\matching\in\matchings}\sum_{r\in\rounds} \sum_{\match\in\matching}c_{\match,r}\lambda^r_\matching
    =
    \sum_{\match\in\matches}\sum_{r\in\rounds}\sum_{\substack{\matching\in\matchings\colon\\
        \match\in\matching}} c_{\match,r}\lambda^r_\matching
    =
    \sum_{\match\in\matches}\sum_{r\in\rounds} c_{\match,r}x_{\match,r},
  \]
  which concludes the proof.
\end{proof}
\begin{remark}
  Since the traditional and permutation formulation are equivalent, one
  might wonder whether also the permutation formulation can be enhanced by
  odd-cut inequalities.
  Indeed, using the transformation~$x_{\{i,j\},r} = \sum_{\pi \in
    \permswotor{i}{j}{r}} z_{i,\pi}$ as in the proof of
  Lemma~\ref{lem:equitraandper}, one can show that the corresponding
  version of odd-cut inequalities is given by
  \begin{align*}
    \sum_{i \in U} \sum_{j \in \teams\setminus U} \sum_{\pi \in
    \permswotor{i}{j}{r}} z_{i,\pi} &\geq 1,
    && U \subseteq T \text{ with } \card{U}\text{ odd}, r \in \rounds,
  \end{align*}
  and that the enhanced traditional and permutation formulation are
  equivalent.
\end{remark}

\section{An extension: $k$-round robin tournaments}
\label{sec:kRR}

In this section, we generalize the models for single round robin
tournaments to $k$-round robin tournaments, where each pair of teams is
required to meet exactly $k$ times, for $k \geq 1$. As a consequence, the total number of matches that need to be scheduled
becomes $\frac12kn(n-1)$, and we set $\rounds \define \{1,2, \ldots, k(n-1)\}$.

\begin{problem}[$k$RR]
  \label{prob:krr}
  Let~$n \geq 4$ be an even integer and let~$k \geq 1$ be integral.
  Given~$n$ teams with corresponding matches~$\matches$,
  a set of~$\frac12kn(n-1)$ rounds~$\rounds$ ($k \geq 1$), as well as an integral cost~$c_{\match,r}$ for
  every~$\match\in\matches$ and round~$r \in \rounds$, the $k$-\emph{round robin ($k$RR)} problem is to find an assignment~$\mathcal{A}
  \subseteq \matches \times \rounds$ of matches to rounds such that (i) every
  team plays a single match per round, (ii) each match is played in $k$, pairwise distinct, rounds, while total cost $\sum_{(\match,r) \in \mathcal{A}} c_{\match,r}$ is minimized.
\end{problem}
\noindent
Problem~\ref{prob:srr} (SRR) is a special case of $k$RR as it arises when $k = 1$.
Another very prominent special case arises
when $k=2$, the so-called Double Round Robin tournament, denoted hereafter
by DRR.

In principle, it is easy to generalize the models from
Section~\ref{sec:models} to account for meeting $k$ times instead of once.
Indeed, by replacing the
right-hand side of constraints \eqref{tra:matchplayed}, or the right-hand
side of constraints \eqref{mat:matchplayed} by $k$, or by redefining
$\permswo{i}$ and $\permswotor{i}{j}{r}$ to ordered lists
for team $i$ that features every opponent $j$ exactly $k$ times,
the resulting formulations for $k$RR directly arise. In fact, we
claim that it is not difficult to verify that the results concerning the polynomial
solvability of the linear relaxations (Lemmata~\ref{lem:pricematch} and \ref{lem:pricepermutation}),
as well as the strength of the relaxations (Theorem~\ref{th:overallstrength}) hold for the $k$RR for each $k \geq 1$.

However, in practice, a number of additional properties become relevant when considering $k$-round robin tournaments:
\emph{phased} tournaments and tournaments where playing \emph{home} or \emph{away} matters. We now discuss these properties, and their consequences for the formulations, in more detail.

\paragraph{Phased} (PH)
The tournament is split into $k$ parts such that each pair
of teams meets once in each part. Here a {\em part} of the tournament
refers to $n-1$ consecutive rounds, starting at round $\ell(n-1) +1$,
for $\ell \in \{0, \dots, k-1\}$.
Moreover,
we use $\rounds_{\ell} \define \{\ell (n-1)+1, \dots, (\ell+1)(n-1)\}$
to denote the rounds in part $\ell \in \{0, \dots, k-1\}$,
and $\rounds \define \bigcup_{\ell=0}^{k-1} \rounds_{\ell}$.

Without the presence of any additional constraints, a phased tournament can be
trivially decomposed in multiple single-round robin tournaments: one for
each set of rounds~$\rounds_\ell$.

\paragraph{Home-away} (HA)
Each team has a home venue, implying that to specify a
schedule it is no longer sufficient to specify the matches in each round;
instead, one also has to specify, for each match, which teams plays home,
and which team plays away.
We denote this by redefining a match between teams $i, j \in \teams$
($i \neq j$) where $i$ is the home-playing team, by an ordered pair $(i, j)$ (in contrast to an unordered pair $\{i, j\}$).

Let~$k$ be a positive integer.
For $n$ teams and a $k$-round robin setting,
denote $\teams \define \{1, \dots, n\}$
and $\rounds \define \{1, \dots, k(n-1) \}$.
Let $\matches \define \{ (i, j) : i, j \in \teams, i \neq j \}$ be the set
of ordered matches. The assignment of match $(i, j) \in \matches$ to round
$r \in \rounds$ comes at a cost $c_{(i, j), r}$,
and in contrast to the SRR case, $c_{(i, j), r}$ and $c_{(j, i), r}$
can be different.
We proceed by describing the phased $k$-round robin problem with home-away patterns ($k$RR-PH-HA):

\begin{problem}[$k$RR-PH-HA]
\label{prob:krrha}
Given an even number~$n \geq 4$ of teams with corresponding matches~$\matches$,
  a set of~$\frac12kn(n-1)$ rounds~$\rounds$ ($k \geq 1$), as well as an integral cost~$c_{\match,r}$ for
  every~$\match\in\matches$ and round~$r \in \rounds$, the $k$RR-PH-HA problem is to find an assignment~$\mathcal{A}
  \subseteq \matches \times \rounds$ of matches to rounds such that (i) every
  team plays a single match per round, (ii) each pair of teams meets once in part~$R_{\ell}, \ell \in \{1, \ldots, k\}$, and (iii) each ordered match is played $\lfloor \frac{k}{2} \rfloor$ or~$\lceil \frac{k}{2} \rceil$ times so that each pair of teams meets in total $k$ times, while total cost $\sum_{(\match,r) \in \mathcal{A}} c_{\match,r}$ is minimized.
\end{problem}

We will show how the three formulations of Sections~\ref{sec:traditionalformulation}--\ref{sec:permutationformulation} can be adapted
to deal with these properties.

\paragraph{Extending the traditional formulation for $k$-RR tournaments}

For reasons of convenience, we assume $k$ is even; this implies that for each pair of distinct teams $i,j \in T$ match~$(i,j)$ and match~$(j,i)$ each need to occur $\frac{k}{2}$ times in any feasible schedule.
\begin{subequations}
  \makeatletter
  \def\@currentlabel{$k$T}
  \makeatother
  \renewcommand{\theequation}{$k$T\arabic{equation}}%
  \label{trakRR}
  \begin{align}
    \min \sum_{(i,j) \in \matches} \sum_{r \in \rounds} c_{(i,j),r}x_{(i,j),r} &&&\label{trakRR:obj}\\
    \sum_{r \in \rounds} x_{(i,j),r} &= \frac{k}{2}, && (i,j) \in \matches,\label{trakRR:eachmatch}\\
    \sum_{r \in \rounds_\ell} (x_{(i,j),r} + x_{(j,i),r}) &= 1, && i,j \in \teams,\; i\neq j,\; \ell \in \{0,\dots,k-1\}, \label{trakRR:eachpart}\\
    \sum_{j \in \teams\setminus\{i\}} (x_{(i,j),r} + x_{(j,i),r}) &= 1, && i \in \teams,\; r \in \rounds,\label{trakRR:eachteameachround}\\
    x_{(i,j),r} &\in \{0,1\}, && (i,j) \in \matches,\; r \in \rounds. \label{trakRR:int}
  \end{align}
\end{subequations}

Constraints \eqref{trakRR:eachmatch} ensure that each match is played $\frac{k}{2}$ times, Constraints \eqref{trakRR:eachpart} express that each pair of teams has to meet once in each part (the ``phased'' property), and Constraints \eqref{trakRR:eachteameachround} prescribe that each team plays a single match in each round.

\paragraph{Extending the matching formulation for $k$RR tournaments}

We now assume that $K_n=(T,A)$ is a {\em directed} multi-graph, where each
arc $(i,j)$ with~$i,j \in T$, $i \neq j$ is present $\frac{k}{2}$ times; a
(directed) matching $\matching$ is now defined as a set of $\frac{n}{2}$
arcs incident to each node once, and $\matchings$ now stands for the set of
all (directed) matchings.

\begin{subequations}
  \makeatletter
  \def\@currentlabel{$k$M}
  \makeatother
  \renewcommand{\theequation}{$k$M\arabic{equation}}%
  \label{matkRR}
  \begin{align}
    \min \sum_{\matching\in\matchings} \sum_{r\in\rounds} d_{\matching,r} y_{\matching,r} &&&\label{matkRR:obj}\\
    \sum_{\matching\in\matchings} y_{\matching,r} &=1, && r\in\rounds,\label{matkRR:eachround}\\
    \sum_{r\in\rounds}\sum_{\substack{\matching\in\matchings\colon\\(i,j)\in\matching}} y_{\matching,r} &= \frac{k}{2}, && (i,j) \in \matchings,\label{matkRR:eachmatch}\\
    \sum_{r\in\rounds_{\ell}} \sum_{\substack{\matching\in\matchings\colon\\(i,j)\in\matching \text{ or } (j,i)\in\matching}} y_{\matching,r} &=1, && i,j \in T,\; i \neq j,\; \ell \in \{0,\dots,k-1\},\label{matkRR:eachpart}\\
    y_{\matching,r} &\in \{0,1\}, && \matching\in\matchings,\; r \in\rounds.
  \end{align}
\end{subequations}
Constraints \eqref{matkRR:eachround} ensure that a (directed) perfect matching is selected in each round, Constraints~\eqref{matkRR:eachmatch} say that each match occurs $\frac{k}{2}$ times, and Constraints \eqref{matkRR:eachpart} model the fact that each pair of teams has to meet once in each part.

\paragraph{Extending the permutation formulation for $k$RR tournaments}

We have to redefine $\Pi^{-i}$: each entry needs to specify home or away,
and of course, the fact that each team meets all other teams in rounds
$(\ell-1)(n-1)+1, \ldots, \ell (n-1)$, for each $\ell \in \{1,\dots,k\}$ needs
to be taken into account. Other than that Formulation \eqref{per} remains
unchanged.

Without giving formal proofs, we claim that the linear relaxations of the extension of the matching formulation, as well as the linear relaxation of the extension of the permutation formulation can be solved in polynomial time. We also claim that the extension of the matching formulation is stronger than the other two formulations---the adaptations to the proofs of Lemmata~\ref{lem:pricematch} and \ref{lem:pricepermutation} and Theorem~\ref{th:overallstrength} are straightforward.

\section{Computational results}
\label{sec:computationalresults}
In this section, we report the outcomes of our computational experiments.
Section~\ref{sec:instances+solvers} describes the test set that we have
used in our experiments.
Afterwards, we investigate the quality of the LP relaxations of the
different models and compare their corresponding values in
Section~\ref{sec:computationalcomparison}.
Finally, we discuss our experience with solving instances of the SRR problem using the matching
formulation, i.e., not only solving the LP relaxation but also the corresponding
integer program.
To this end, we have implemented a branch-and-price algorithm whose details
are described in Section~\ref{sec:branchandprice}.

\subsection{Test set}
\label{sec:instances+solvers}

We have generated~\num{1000} instances\footnote{The instances as well as
the implementation of our algorithms are publicly available at
\url{https://github.com/JasperNL/round-robin}; all experiments have been conducted using
the code with githash \texttt{1657b4d7}.
} of the SRR problem to evaluate the quality of the
LP relaxations of the different models.
Our test set comprises of instances of different sizes and thus different levels
of difficulty, which are parameterized by a tuple~$(n, \rho)$ and have
cost coefficients attaining values~$0$ or~$1$.
Parameter~$n$ encodes the number of teams and has range~$n \in \{6, 12, 18,
24\}$;
parameter~$\rho$ controls the number of~1-entries in the objective.
More precisely, we pick a set of match-round
pairs~$\matches \times \rounds$ of size~$\lfloor \rho \cdot \card{\matches
  \times \rounds} \rfloor$ uniformly at random, denoted by $S \subseteq \matches \times
\rounds$, where $\rho \in \{0.5, 0.6, 0.7, 0.8, 0.9\}$.
The generated instance consists of $n$ teams
and has cost coefficients~$c_{m,r} = 1$ if~$(m, r) \in S$
and~$c_{m,r} = 0$ otherwise. For each combination of $n \in \{6, 12, 18,
24\}$ and $\rho \in \{0.5, 0.6, 0.7, 0.8, 0.9\}$ we have generated 50
instances.

\subsection{A computational comparison of the linear relaxations}
\label{sec:computationalcomparison}
In this section, we provide a computational comparison between the
LP relaxation values of the traditional
formulation~\eqref{tra} (and thus by 
Lemma~\ref{lem:equitraandper} also of the permutation formulation)
and LP relaxation values of the matching
formulation~\eqref{mat},
and compare these to the actual optimal (or best found) integral solutions.
Before we discuss our numerical results, we provide details about our
implementation as well as on how we find optimal integral solutions first.

\paragraph{Implementation details}
To find the LP relaxation values, 
we implemented both formulations 
in Python~3 using the \texttt{PySCIPOpt} 4.1.0
package~\cite{MaherMiltenbergerPedrosoRehfeldtSchwarzSerrano2016} for
\texttt{SCIP}~8.0.0~\cite{BestuzhevaEtal2021OO}, 
with \texttt{CPLEX} 20.1.0.0 as LP solver.
The traditional formulation is implemented as a compact model.
For the matching formulation, we use a column generation procedure that
receives a subset of all variables, solves the corresponding LP
relaxation restricted to these variables, and adds further variables
until it can prove that an optimal LP solution has been found.
To identify whether new variables need to be added, we solve the so-called
pricing problem, which corresponds to separating a corresponding solution
of the dual problem.
The separation problem can be solved by finding a maximum weight perfect
matching as detailed in the proof of Lemma~\ref{lem:pricematch}.
We start with the empty set of variables, which 
means that the primal problem is initially infeasible. 
Analogously to the proof of Lemma~\ref{lem:pricematch}, we resolve 
infeasibility by adding variables to the problem that are associated with 
a dual constraint that violate a dual unbounded ray of this infeasible 
problem.
The column generation procedure has been embedded in a so-called 
\texttt{pricer} plug-in of \texttt{SCIP}, which adds newly generated 
variables to the matching formulation.
The maximal weight perfect matchings are computed using 
\texttt{NetworkX}~2.5.1, which provides an implementation of Edmonds' 
blossom algorithm.

\paragraph{Finding optimal integer solutions}
To obtain the optimal integer solution value of as many instances as
possible, we have used two different solvers to solve the integer program
of Model~\eqref{tra}.
On the one hand, we have used \texttt{SCIP} as described in the above
setup.
On the other hand, we have modeled~\eqref{tra} using \texttt{Gurobi}~9.1.2
via its Python~3 interface.
For each instance and solver, we have imposed a time limit of~\SI{48}{\hour} to
find an optimal integer solution.
Using \texttt{SCIP}, we managed to solve 852 of the 1000 instances to 
optimality. With \texttt{Gurobi}, we were able to solve 866 of the 1000 
instances to optimality.
There were 45 instances where \texttt{SCIP} found a better primal 
objective value, and 79 instances where \texttt{Gurobi} found a better 
primal objective value.
All experiments have been run on a compute cluster with 
identical machines, using one (resp. two) thread(s) 
on Xeon Platinum 8260 processors, with \SI{10.7}{\giga\byte} 
(resp. \SI{21.4}{\giga\byte}) memory, 
respectively for \texttt{SCIP} and \texttt{Gurobi}. 

\begin{table}[!tbp]
\caption{Comparison of the LP relaxation values of the traditional and
  matching formulation.}
\label{tab:comp}
\footnotesize
\centering

\begin{tabular}{
@{}
r@{\hspace{4pt}}
r
r@{\ }r@{\ }r
c@{\ }c
c@{\hspace{4pt}}
r@{\ }r@{\ }r
c@{\ }c
r@{\hspace{4pt}}r
@{}
}
\toprule

&&\multicolumn{5}{c}{all instances}
&\multicolumn{8}{c}{restricted to instances with $\lprelaxation{\rm tra} < \ipsolution$} \\
\cmidrule(l{4pt}r{4pt}){3-7}
\cmidrule(l{4pt}r{4pt}){8-15}

&&\multicolumn{3}{c}{average value}
&\multicolumn{2}{c}{solved}
&&\multicolumn{3}{c}{average value}
&\multicolumn{2}{c}{solved}
&\multicolumn{2}{c}{gap closed} \\
\cmidrule(l{4pt}r{4pt}){3-5}
\cmidrule(l{4pt}r{4pt}){6-7}
\cmidrule(l{4pt}r{4pt}){9-11}
\cmidrule(l{4pt}r{4pt}){12-13}
\cmidrule(l{4pt}r{4pt}){14-15}

$n$ & $\rho$ & $\lprelaxation{\rm tra}$ & $\lprelaxation{\rm mat}$ & 
$\ipsolution$ & O & T & 
\# & $\lprelaxation{\rm tra}$ & $\lprelaxation{\rm mat}$ & 
$\ipsolution$ & O & T & 
average & maximal \\
\midrule
  6 & 0.5 &  2.227 &  2.297 &  2.380 & 50 &  0 & 
             14 &  2.452 &  2.702 &  3.000 & 14 &  0 &    43.45\% &   
             100.00\% \\
  6 & 0.6 &  3.802 &  3.865 &  3.920 & 50 &  0 & 
             12 &  3.924 &  4.188 &  4.417 & 12 &  0 &    52.08\% &   
             100.00\% \\
  6 & 0.7 &  5.430 &  5.510 &  5.540 & 50 &  0 & 
              9 &  5.611 &  6.056 &  6.222 &  9 &  0 &    66.67\% &   
              100.00\% \\
  6 & 0.8 &  7.620 &  7.635 &  7.660 & 50 &  0 & 
              4 &  7.250 &  7.438 &  7.750 &  4 &  0 &    37.50\% &   
              100.00\% \\
  6 & 0.9 & 10.003 & 10.040 & 10.060 & 50 &  0 & 
              6 & 10.361 & 10.667 & 10.833 &  6 &  0 &    66.67\% &   
              100.00\% \\
\midrule
 12 & 0.5 &  0.080 &  0.080 &  0.080 & 50 &  0 & 
              0 & --     & --     & --     &  0 &  0 & --         & 
              --         \\
 12 & 0.6 &  2.018 &  2.213 &  3.480 & 50 &  0 & 
             49 &  2.019 &  2.217 &  3.510 & 49 &  0 &    15.29\% &   
             100.00\% \\
 12 & 0.7 &  8.022 &  8.342 &  9.500 & 50 &  0 & 
             50 &  8.022 &  8.342 &  9.500 & 50 &  0 &    22.27\% &    
             70.77\% \\
 12 & 0.8 & 17.184 & 17.474 & 18.340 & 50 &  0 & 
             50 & 17.184 & 17.474 & 18.340 & 50 &  0 &    29.89\% &   
             100.00\% \\
 12 & 0.9 & 31.459 & 31.654 & 31.840 & 50 &  0 & 
             31 & 31.096 & 31.410 & 31.710 & 31 &  0 &    56.87\% &   
             100.00\% \\
\midrule
 18 & 0.5 &  0.000 &  0.000 &  0.000 & 50 &  0 & 
              0 & --     & --     & --     &  0 &  0 & --         & 
              --         \\
 18 & 0.6 &  0.060 &  0.060 &  0.060 & 50 &  0 & 
              0 & --     & --     & --     &  0 &  0 & --         & 
              --         \\
 18 & 0.7 &  2.045 &  2.292 &  5.600 & 50 &  0 & 
             50 &  2.045 &  2.292 &  5.600 & 50 &  0 &     6.68\% &    
             15.66\% \\
 18 & 0.8 & 19.831 & 20.330 & 23.900 & 50 &  0 & 
             50 & 19.831 & 20.330 & 23.900 & 50 &  0 &    12.37\% &    
             28.19\% \\
 18 & 0.9 & 52.700 & 53.066 & 54.500 & 50 &  0 & 
             49 & 52.673 & 53.047 & 54.510 & 49 &  0 &    21.55\% &   
             100.00\% \\
\midrule
 24 & 0.5 &  0.000 &  0.000 &  0.000 & 50 &  0 & 
              0 & --     & --     & --     &  0 &  0 & --         & 
              --         \\
 24 & 0.6 &  0.000 &  0.000 &  0.000 & 50 &  0 & 
              0 & --     & --     & --     &  0 &  0 & --         & 
              --         \\
 24 & 0.7 &  0.200 &  0.200 &  4.340 &  0 & 50 & 
             49 &  0.163 &  0.163 &  4.408 &  0 & 49 &     0.00\% &     
             0.00\% \\
 24 & 0.8 & 12.352 & 12.893 & 24.180 &  0 & 50 & 
             50 & 12.352 & 12.893 & 24.180 &  0 & 50 &     4.57\% &     
             7.91\% \\
 24 & 0.9 & 69.327 & 69.922 & 74.860 & 29 & 21 & 
             50 & 69.327 & 69.922 & 74.860 & 29 & 21 &    10.69\% &    
             21.68\% \\
\bottomrule
\end{tabular}

\end{table}

\paragraph{Numerical results}
Table~\ref{tab:comp} shows the aggregated computational results of our experiments.
For each number of teams $n$ and ratio $\rho$,
we provide the average of the objective values of the relaxation of the traditional formulation (column ``$\lprelaxation{\rm tra}$''), 
the average of the objective values of the relaxation of the matching formulation (column ``$\lprelaxation{\rm mat}$''), and the average optimum value (column ``$\ipsolution$'').
Notice that for $n=24$ we have not been able to solve all instances to optimality; in this case, we use the value of the best
known solution instead of the (unknown) optimum for that instance in
the~$\ipsolution$ column. Recall that each value is an average over 50
instances.
The number of optimally solved instances (resp.\ instances not terminating
within the time limit) are shown in column ``O'' (resp.\ ``T'').

To be able to assess the strength of the matching formulation compared to the traditional formulation, we focus, in the right side of the table, on those instances for which $\lprelaxation{\rm tra} < \ipsolution$; their number (out of 50) is given in the column labeled ``\#''. From this column, we see that the fraction~$\rho$ that leads to instances with a gap between $\lprelaxation{\rm tra}$ and $\ipsolution$ slowly increases with $n$. Indeed, for $n=6$, most instances do not have a gap, for $n=12$, almost all instances with $\rho \in \{0.6, 0.7, 0.8\}$ have a gap, and for $n=18$, almost all instances with $\rho \in \{0.7, 0.8, 0.9\}$ have a gap.

We use the notion of the \emph{relative gap} that is closed by the matching formulation relative to
the traditional formulation, given by
\[
{\rm rgap}(I) \define \frac{
\lprelaxation{\rm mat}(I) - \lprelaxation{\rm tra}(I)}{
\ipsolution(I) - \lprelaxation{\rm tra}(I)}\ %
{\rm for~an~instance~}I~{\rm of~SRR} \text{ with } \ipsolution(I) - \lprelaxation{\rm tra}(I) > 0.
\]
A value of zero for ${\rm rgap}(I)$ implies that the relaxation values of the traditional
formulation and the matching formulation are equal, while a value of one
(i.e., 100\%) implies that the relaxation of the matching formulation is
equal to the true objective of the optimal integral
solution. The column ``average'' gives the average {\rm rgap}, whereas
column ``maximal'' shows the maximum relative closed gap for an instance of
this sub test set.

For $n=6$, there are few instances with a gap. However, for those instances for which there is a gap, it is clear that a sizable part of that gap is closed by the relaxation of the matching formulation.
For larger values of~$n$, many instances have a gap. We observe that a significant percentage of the gap is closed by the relaxation of the matching formulation.
If~$n$ is getting larger, however, both the value of the average gap closed as well as the value of the maximal gap closed decrease.
We conclude that for small values of~$n$, and thus for many realistic
applications, the matching formulation provides a much better relaxation
value than the traditional formulation.

\subsection{A branch-and-price algorithm}
\label{sec:branchandprice}

Since the matching formulation can dominate the traditional formulation, a
natural question is whether the stronger formulation also allows to solve
the SRR problem faster than the traditional formulation.
For this reason, we have implemented a branch-and-price algorithm (in the
computational setup as described above) to compute optimal integral
solutions of the matching formulation.
That is, we use a branch-and-bound algorithm to solve the matching
formulation, where each LP relaxation is solved using a column generation
procedure.

\paragraph{Implementation details}
In classical branch-and-bound algorithms, the most common way to implement
the branching scheme is to select a variable~$x_i$ whose value~$x^\star_i$
in the current LP solution is non-integral and to generate two
subproblems by additionally enforcing either~$x_i \leq \lfloor x^\star_i
\rfloor$ or~$x_i \geq \lceil x^\star \rceil$.
In principle, this strategy is also feasible for the matching formulation,
where the subproblems correspond to forbidding a
schedule~$\matching\in\matchings$ for a round~$r \in \rounds$ or fixing the
schedule in round~$r$ to be~$\matching$.
This branching scheme, however, leads to a very unbalanced branch-and-bound
tree as the former subproblem only rules out a very specific schedule,
while the latter one fixes the matches of an entire round.
Another difficulty of the classical scheme is that it might affect the
structure of the pricing problem in the newly generated subproblems.
Ideally, the pricing problem should not change such that the same algorithm
can be used for adding new variables to the problem.
We will address both issues next.

To obtain a more balanced branch-and-bound tree, we have implemented a
custom branching rule following the Ryan-Foster branching
scheme~\cite{RyanFoster1981}:
Our scheme selects a match~$\{i, j\} \in \matches$ at a round~\mbox{$r \in
  \rounds$} and creates two children.
In the left child, we forbid that $\{i, j\}$ is played in round~$r$,
and in the right child, we enforce that $\{i, j\}$ is played in round $r$.
Note that for all matchings~$\matching \in \matchings$
this branching decision fixes all variables $y_{\matching,r}$ to 
zero if $\{i, j\} \in \matching$ for the left child,
and $\{i, j\} \notin \matching$ for the right child.

Using this branching strategy, the structure of the pricing problem at each
subproblem remains a matching problem.
At the root node of the branch-and-bound tree, we need to solve a maximum
weight perfect matching problem in a weighted version of~$K_n$ as described
above.
At other nodes of the branch-and-bound tree, we have added branching
decisions that enforce that two teams~$i$ and~$j$ either do meet or do not meet
in a round~$r \in \rounds$.
These decisions can easily be incorporated by deleting edges from~$K_n$.
When generating variables for round~$r$, we remove edge~$\{i,j\}$
from~$K_n$ if~$i$ and~$j$ shall not meet in this round; if the
match~$\{i,j\}$ shall take place, then we remove all edges incident
with~$i$ and~$j$ except for~$\{i,j\}$.
Consequently, our branching strategy allows to solve the LP relaxations of
all subproblems in polynomial time.

Since our Python implementation of the traditional and matching formulation
took too much time to be used in a branching scheme, we decided to implement
our branch-and-price algorithm as a plug-in using the C-API of \texttt{SCIP}.
The pricer plug-in is analogous, and maximal weight perfect matchings are 
now computed using the \texttt{LEMON}~1.3.1 graph library.
To ensure that the branching decisions are taken into account, we also
implemented a constraint that fixes $y_{\matching, r}$ to zero if the
matching~$\matching$ violates the branching decisions for round~$r$, and
added a plug-in that implements the branching decisions. 

The branching rule sketched above admits some degrees of freedom in
selecting the match~$\{i,j\}$ and round~$r$.
In our implementation, we decided to mimic two well-known branching rules:
most infeasible branching and strong branching on a selection of 
variables, see Achterberg \etal~\cite{achterberg2005branchingrulesrevised} for an overview on branching rules.
Most infeasible branching branches on a binary variable with
fractional value in an LP solution that is closest to~0.5,
and strong branching branches on the variable that yields the 
largest dual bound improvement based on some metric. 
Since strong branching requires significant computational effort, it is common to make 
a limited branching candidate selection and apply strong branching on 
those.

\begin{algorithm}[!tbp]
\SetKwInOut{Input}{input}
\SetKwInOut{Output}{output}
\caption{Determining the branching candidate for an LP node.}
\label{alg:branchingrule}
\Input{An LP solution $y^\star_{m, r}$ in a branch-and-bound tree node at 
depth $d$ with objective ${\rm obj}$.}
\Output{The branching decision
(match $m \in \matches$ on round $r \in \rounds$),
or detected integrality.}

\tcp{fractional assignment of match $m$ to round $r$}

compute 
${\rm assign}_{m, r} \gets \sum_{M \in \matchings\colon \match\in\matching} y^\star_{M,r}$ 
for $m \in \matches$ and $r \in \rounds$%
\;

\BlankLine

\If{${\rm assign}_{m, r}$ is $0$ or $1$ 
for all $m \in \matches$ and $r \in \rounds$}%
{
	\Return integral solution found\;
}

\BlankLine

\tcp{fractional part of ${\rm assign}_{m, r}$}
compute 
${\rm frac}_{m, r} \gets 
\min\{ {\rm assign}_{m, r}, 1.0 - {\rm assign}_{m, r} \}$
for $m \in \matches$ and $r \in \rounds$%
\;

\BlankLine

\tcp{score for every match-round pair}
compute 
$
{\rm score}_{m, r} 
\gets 
{\rm frac}_{m, r}
\cdot 
(1.0 + | c_{m, r} |)
\cdot 
({\rm assign}_{m, r})^2
$
for $m \in \matches$ and $r \in \rounds$%
\label{alg:branchingrule:score}
\;

\BlankLine

\tcp{strong branching candidate selection}
$
{\rm number\_of\_candidates} \gets 
\max\{
1,
\lfloor
0.1 \cdot \card{\matches \times \rounds} \cdot 0.65^{d}
\rfloor
\}
$\;

\If{${\rm number\_of\_candidates} > 1$}{
	pick ${\rm number\_of\_candidates}$ 
	candidates $(m, r) \in \matches \times \rounds$
	with highest ${\rm score}_{m, r}$
	as strong branching candidates\;
	\ForEach{strong branching candidate $(m, r)$}{
		\If{${\rm score}_{m, r} = 0.0$}{
			\tcp{then ${\rm assign}_{m, r}$ is 0 or 1, skip this candidate}
			\Continue\ (i.e., skip this candidate)\;
		}
		apply strong branching on $(m, r)$,
		with objectives
		${\rm obj}_{\rm forbid}$, ${\rm obj}_{\rm enforce}$
		in the two children\;
		compute ${\rm score}_{m, r}^\star \gets 
		({\rm obj}_{\rm forbid} - {\rm obj} + 1.0) \cdot 
		({\rm obj}_{\rm enforce} - {\rm obj} + 1.0)$\;
		\label{alg:branchingrule:scorestar}
	}
	\Return branch on strong branching candidate $(m, r)$ 
	with maximal~${\rm score}_{m, r}^\star$\;
}
\Else{
	\tcp{do not apply strong branching deep in the branch-and-bound tree}
	\Return branch on $(m, r) \in \matches \times \rounds$
	with maximal ${\rm score}_{m, r}$\;
}

\end{algorithm}

The pseudocode for our branching rule 
is given in Algorithm~\ref{alg:branchingrule}.
We start by computing the fractional match-on-round assignment values
induced by the $y$-variables.
Then, we make a selection of 
potentially good branching candidates~$(m, r) \in \matches \times \rounds$,
that is based on the ${\rm score}_{m, r}$ metric
shown in Line~\ref{alg:branchingrule:score}:
this score prioritizes match-on-round assignments for which 
\begin{enumerate*}[label=(\roman*), ref=(\roman*)]
\item\label{score1} the assignment value is close to $0.5$,
\item\label{score2} the cost coefficients are large,
and
\item\label{score3} the assignment values are relatively high.
\end{enumerate*}
Using this score, we hope to resolve fractionality soon (by~\ref{score1}).
By~\ref{score2}, we want to enforce a significant change of the objective
value in the child that forbids match~$\match$, whereas the child enforcing
that~$\match$ is played selects a match that is most likely played due
to~\ref{score3}.
Experiments show that full strong branching leads to a smaller number of 
nodes to solve the problem, but this turns out to be very costly computationally.
Therefore, we only apply strong branching for branch-and-bound tree nodes 
close to the root, and only evaluate a subset of branching candidates 
that have the highest ${\rm score}_{m, r}$-metric.
This is our candidate pre-selection. 
The higher the depth of the considered branch-and-bound tree node, the 
smaller the number of candidates considered.
Of those branching candidates, we pick the candidate that 
maximizes~${\rm score}_{m, r}^{\star}$ as defined in 
Line~\ref{alg:branchingrule:scorestar}.
The goal of this score is to choose the candidate where the objective 
values of the hypothetical children are different from the current node's 
objective. By considering the product of this difference, we prioritize
if the objective of both hypothetical children have some difference with the 
current node objective.
If the number of candidates is only one, no strong branching is applied 
and that candidate match-on-round assignment is chosen for branching.

All experiments have been run on a Linux cluster with Intel Xeon E5
\SI{3.5}{\GHz} quad core processors and~\SI{32}{\giga\byte} memory.
The code was executed using a single thread and the time limit for all
computations was~\SI{2}{\hour} per instance.

\paragraph{Numerical results}
Table~\ref{tab:comp:branch} summarizes our results for our
instances for~$n \in \{6,12,18\}$.
We distinguish the instances by their parameters~$n$ and~$\rho$, and we
report on the number of instances that could be solved (resp.\ could not be
solved) within the time limit in column ``O'' (resp. ``T'').
Moreover, we report on the the minimum, mean, and maximum running time per
parameterization.
The mean of all running times~$t_i$ is reported in shifted geometric mean~$\prod_{i =
  1}^{50} (t_i + s)^{\frac{1}{50}} - s$ using a shift of~\SI{10}{\second}
to reduce the impact of instances with very small running times.

\begin{table}[t]
\caption{Computational results for the branch-and-price algorithm for Model~\eqref{mat}.}
\label{tab:comp:branch}
\footnotesize
\centering

\begin{tabular}{@{}rrcccrrr@{}}
\toprule
&&&\multicolumn{2}{c}{Solved}&
\multicolumn{3}{c}{Solving time (s)}\\
\cmidrule(l{4pt}r{4pt}){4-5}
\cmidrule(l{4pt}r{4pt}){6-8}
$n$ & $\rho$ & \# & O & T & min & mean & max \\
\midrule
  6&0.5&50&50& 0&    0.00&    0.00&    0.01\\
  6&0.6&50&50& 0&    0.00&    0.00&    0.01\\
  6&0.7&50&50& 0&    0.00&    0.00&    0.01\\
  6&0.8&50&50& 0&    0.00&    0.00&    0.01\\
  6&0.9&50&50& 0&    0.00&    0.00&    0.01\\
\midrule
 12&0.5&50&50& 0&    1.25&    2.38&    5.73\\
 12&0.6&50&50& 0&    0.12&    4.09&    8.28\\
 12&0.7&50&50& 0&    0.21&    3.33&    7.38\\
 12&0.8&50&50& 0&    0.14&    2.05&    7.63\\
 12&0.9&50&50& 0&    0.11&    0.54&    2.21\\
\midrule
 18&0.5&50&50& 0&   54.61&  106.09&  210.77\\
 18&0.6&50&48& 2&  103.41&  866.21& 7200.00\\
 18&0.7&50& 3&47&  930.53& 6854.47& 7200.09\\
 18&0.8&50&22&28&  312.65& 4084.21& 7200.06\\
 18&0.9&50&50& 0&    6.74&  332.98& 3990.61\\
\bottomrule
\end{tabular}
\end{table}

We observe that instances with~6 and~12 teams can be solved very
efficiently within fractions of seconds in the former and within seconds in
the latter case.
Instances with~18 teams are more challenging, in particular, if the
ratio~$\rho \in \{0.7, 0.8\}$.
In this case, only~3 and~22 instances could be solved, respectively, but
note that not all instances are equally difficult.
For instance, for~$n = 18$ and~$\rho = 0.8$, there exists an instance that
can be solved within roughly five minutes, whereas the mean running time is
more than an hour.
To fully benefit from the strong LP relaxation of the matching formulation,
it might be the case that additional algorithmic enhancements can further improve the
performance of the branch-and-price algorithm.

\section{Conclusion}
\label{sec:conclusion}

The use of integer programming for finding schedules of round robin tournaments is widespread. We have introduced and analyzed two new formulations for this problem, one of which (the matching formulation) is stronger than the other formulations. We have proposed a class of valid inequalities for the matching formulation, which may be of use when developing cutting-plane based techniques for this problem. By randomly generating instances, we studied the strength of the formulations, and we implemented a branch-and-price algorithm based on the matching formulation to see its efficiency.
Although this algorithm is able to solve small-scale instances rather
efficiently, solving large instances of the SRR efficiently remains a challenge.

Possible directions of future research are thus to further strengthen our
integer programming formulations/techniques.
On the one hand, once can investigate additional cutting planes to strengthen both the traditional
and matching formulation.
For the matching formulation, cutting planes will in particular affect the pricing
problem and thus might change its structure.
Thus, the trade-off between the strength of cutting planes and the
difficulty of solving the pricing problem that needs to be investigated.
On the other hand, one can enhance our branch-and-price algorithm in
several directions, e.g., the development of more sophisticated branching
rules or heuristics for producing good schedules.

\paragraph{Acknowledgment}
The fourth author's research is supported by the Dutch Research Council (NWO) through Gravitation grant NETWORKS-024.002.003.

\bibliographystyle{abbrv}

\end{document}